\newtheorem{lemma}{Lemma}[section]
\newtheorem{theorem}{Theorem}
\newtheorem{remark}{Remark}
\newenvironment{proof}%
 {\begin{trivlist} \item[]{\bf Proof. }}%
 {\hspace*{\fill}$\rule{.4\baselineskip}{.4\baselineskip}$\end{trivlist}}
\setlist[enumerate]{leftmargin=.5in}
\setlist[itemize]{leftmargin=.5in}
\newcommand{\Mminus}{M_{\pi,\epsilon}^{-}}
\newcommand{\Mplus}{M_{\pi,\epsilon}^{+}}
\newcommand{\MF}{M_{F,\epsilon}}
\newcommand{\MminusZ}{M_{\pi,0}^{-}}
\newcommand{\MplusZ}{M_{\pi,0}^{+}}
\newcommand{\MFZ}{M_{F,0}}
\title{Slow-fast dynamics of strongly coupled adaptive frequency oscillators\thanks{
Part of this work was supported by New York University and the European Research Council (ERC) under the European Union's Horizon 2020 research and innovation programme (grant agreement No 637935) 
}}
\author{Ludovic Righetti\thanks{New York University (\tt{ludovic.righetti@nyu.edu})}
     \and Jonas Buchli\thanks{DeepMind, London, UK, (\tt{buchli@google.com})}
  \and Auke J. Ijspeert\thanks{Ecole Polytechnique F\'ed\'erale de Lausanne, Lausanne, Switzerland, (\tt{auke.ijspeert@epfl.ch})}
}
\begin{document}
\maketitle

\begin{abstract}
Oscillators have two main limitations: their synchronization properties are limited (i.e they have a finite synchronization region) and they have no memory of past interactions (i.e. they return to their intrinsic frequency whenever the entraining signal disappears).
We previously proposed a general mechanism to transform an oscillator into an adaptive frequency oscillator which adapts its parameters to learn the frequency of any input signal. The synchronization region then becomes infinite and the oscillator retains the entrainment frequency when the driving signal disappears.
While this mechanism has been successfully used in various applications, such as robot control or observer design for active prosthesis, a formal understanding of its properties is still missing.
In this paper, we study the adaptation mechanism in the case of strongly coupled phase oscillators
and show that non-trivial slow-fast dynamics is at the origin of the adaptation.
We show the existence of a layered structure of stable and unstable invariant slow manifolds and demonstrate how the input signal forces the dynamics to jump between these manifolds at regular intervals, leading to exponential convergence of the frequency adaptation. We extend the idea to a network of oscillators with amplitude adaptation and show that the slow invariant manifolds structure persists.
Numerical simulations validate our analysis and extend the discussion to more complex cases.
\end{abstract}



\section{Introduction}
Oscillators are used increasingly in science and engineering, either for modeling or design purposes.
They are well suited for applications that involve synchronization with periodic signals. 
However, since they traditionally have a fixed intrinsic frequency, two main limitations arise.
First their synchronization properties are limited in the sense that they can synchronize only with signals with close enough frequencies, i.e. they have a finite synchronization region.
Second, they have no memory of past interactions, i.e. if the entrainment signal disappears they return to their original frequency of oscillations.

Consequently when one wants to design systems that have unlimited synchronization capabilities and/or where past interactions (i.e. memory) plays an important role, these models are not well adapted.
Some biological oscillators appear to have a mechanism to adapt their intrinsic frequencies, for example to explain the synchronization phenomena of some species of fireflies \cite{ermentrout91} or to explain how the neural pattern generators that control the locomotion of animals can adapt to a body that changes dramatically in size during the development of the animal \cite{ijspeert08}.
In engineering applications, it can be beneficial to have systems capable to synchronize to unknown, noisy and potentially time-varying periodic inputs without the need to consider synchronization regions. For example, to ensure a controller automatically adapts to the gait-dependent resonant frequencies of a robot \cite{buchli08b}).
Finally, dynamical systems memorizing frequencies of past interactions afford a simple form of learning.

In \cite{buchli04b,righetti06b}, we proposed a general mechanism to transform a nonlinear oscillator into an adaptive frequency oscillator, i.e. an oscillator that can adapt its parameters to learn the frequency of an arbitrary periodic input signal.
This mechanism was used, for example, to transform Hopf, Van der Pol, Rayleigh and Fitzugh-Nagumo oscillators and also the R\"ossler strange attractor into adaptive frequency systems \cite{righetti06b}. 
This effect goes beyond mere synchronization as it works for ranges of frequencies beyond usual synchronization regions (infinite range in the case of phase oscillators) and the adapted frequency remains even when the input signal disappears. Moreover, the oscillator can track changes in the frequency of the input.
This approach has been used in several
robotic, control and estimation applications over the past decade but has never been formally studied apart for the weak coupling case.

In this paper, we study the prototypical case of an adaptive frequency phase oscillator with strong coupling
\begin{align}
    \dot{\phi} &= \lambda\omega - K \sin\phi F(t)\label{eq:phaseafo_intro1}\\
    \dot{\omega} &= - K \sin\phi F(t)\label{eq:phaseafo_intro2}
\end{align}
where $\phi$ is the phase of the oscillator, $\omega$ its frequency, $\lambda>0$ a constant parameter and $K>0$ the coupling strength. This system adapts its frequency to the frequency of the external input $F(t)$. After adaptation,
the frequency of the input signal can explicitly be read out from $\omega$, i.e. the system can extract the frequency of a periodic input signal without assumptions on $F(t)$ or the need of an explicit Fourier transform. 

\subsection{Previous results for weak coupling}
We previously proved the convergence of $\omega$ to 
one of the frequency component of a periodic input $F(t)$ for small coupling $K\ll 1$ \cite{righetti06b}. Through perturbation analysis we showed that frequency adaptation was taking place at the second order perturbation, thus emphasizing the importance of the interaction between the tendency of the oscillator to synchronize and the dynamics of $\omega$, both having an evolution on two different time-scales.
In particular, we showed that for $F(t) = \sum_{n=-\infty}^\infty A_n \mathrm{e}^{i n \omega_F t}$, the frequency adaptation behaved locally as
\begin{align}
\omega(t) &= \omega_0 + KP(t) + K^2 D_\omega(t)+ O(K^3)\\
D_\omega(t) &= \left(\frac{-A_0}{2 \omega_0} + \sum_{n\in\mathrm{N}^*} \frac{|A_n|^2 \omega_0}{((n\omega_F)^2 - \omega_0^2)} \right) (t - t_0)\label{eq:small_coupling}
\end{align}
where $P(t)$ is periodic with 0 mean, $\omega_0$ and $t_0$ are the initial conditions of the system. This shows that $\omega(t)$ at second order
has a linear drift towards one of the frequency component $n\omega_F$ of $F(t)$, depending on the initial frequency of the oscillator. The results also provide an approximate characterization of the different basins of attraction (separated by the roots of $D_\omega$) for different frequencies.

While this analysis accurately describes the behavior of the system for weak coupling, it completely fails to capture the dynamics of the system for strong coupling which is the desirable mode of operation in real applications. Numerical simulations suggest that frequency adaptation persists for strong coupling $K\gg 1$ and after convergence, the frequency parameter oscillates around the correct frequency value with an amplitude bounded when $K\to \infty$. However, a rigorous analysis of the strong coupling case is still lacking. 

\subsection{Networks of adaptive frequency oscillators}
In \cite{buchli08} we numerically studied the behavior of a large number of such adaptive phase frequency oscillators coupled via a negative mean field.
Numerical evidence showed that it was possible to very well extract the frequency spectrum of arbitrary signals in real-time, ranging from signals with discrete spectra to ones with time-varying and continuous spectra. 
One interesting observation was that for time-varying spectra, the ability of the oscillators to follow time-varying frequencies resembles a first order linear system with cutoff frequency at $1$ $\mathrm{rad}\cdot \mathrm{s}^{-2}$. It means that frequency change can be tracked well up to rates of change of $1$ $\mathrm{rad}\cdot \mathrm{s}^{-2}$. 
In this contribution we provide a rigorous explanation to this phenomenon.
This network of oscillators has been extended by adding an adaptive weight to each oscillator in the mean field sum \cite{righetti06}. The oscillator can then also adapt its amplitude to match the energy content of a specific frequency component of an input signal. This idea has been used, for example, to construct controllers that coordinate the joints of a legged robot during walking \cite{righetti06}.

\subsection{Applications to control and estimation}
Adaptive frequency oscillators have found numerous applications in control and estimation applications, especially robotics.
In adaptive control, they were used to automatically tune a controller to the resonant frequency of
a legged robot via a simple feedback loop \cite{buchli08b,buchli06b,buchli04b,buchli05}. 
In that case, the efficiency of the robot locomotion is automatically optimized and any change in the natural dynamics is tracked by the adaptive frequency oscillator without external intervention, which is especially useful when the robot changes gait.
Recently, the mechanism was used for the design of locomotion controllers that can quickly react to environmental changes \cite{nachstedt2017}.
These oscillators have also been used to estimate the temporal derivatives of periodic signals with no delay \cite{Ronsse:im}.
This type of estimation is part of the control system of an exoskeleton used in a robot-assisted rehabilitation context \cite{ronsse11}.
An electronic implementation of adaptive frequency oscillators was also proposed in \cite{ahmadi09}.

Networks of such oscillators were used to construct limit cycles in the context of robot learning from demonstration and robot control, where coupling between the oscillators was added to ensure stability when sensory feedback is added.
Such a network was originally used to learn a complex motion pattern from demonstrations and generate a controller capable of modulating online those patterns through feedback. It was applied to the control of bipedal locomotion in \cite{righetti06,righetti05b}. 
The idea was extended to learn and robustly generate other types of periodic movements for robots with arms and legs \cite{gams08b, gams09, Petric:2011dr}.

These and other applications using the adaptive frequency mechanism described above require a precise understanding of the properties and limits of the mechanism. This is particularly important for applications involving humans in the loop and safety-critical components. However, a formal analysis of this mechanism is still missing for strong coupling strengths, which is the most interesting mode of operation.

\subsection{Related work}
Oscillators that can adapt their frequency are not novel.
In \cite{ermentrout91}, a model for frequency adaptation of oscillators was proposed to model the synchronization behavior observed in fireflies. In \cite{borisyuk01, borisyuk_oscillatory_2004}, a network of second order phase oscillators were used to model novelty detection. In \cite{nakanishi03}, a second order phase oscillator was proposed to adapt the frequency of walking movement generation to the measured natural frequency of a biped robot and synchronize stepping.

Frequency adaptation has also been studied in networks of second-order phase oscillators, or oscillators with "inertial" effects, in Kuramoto-like models \cite{acebron98, acebron_synchronization_2000, tanaka_first_1997, taylor_spontaneous_2010}.
More general networks of adaptive dynamical systems, akin to the mechanism we proposed, have also been studied in \cite{Rodriguez:2014ey}, but with the use of dissipative coupling.

All of the models of frequency adaptation describe above assume that each oscillator has an explicit representation of either the phase of other oscillators, or the input signal's period or frequency. This is in contrast to the model we study, which makes no assumption on the nature of the input signal $F(t)$. This is particularly important for the robotic, control and estimation applications described above where exact properties of the input signals are not known in advance and typically change over time.

Closer to the model we study, \cite{nishii99} proposed a model of frequency adaptation for network of coupled phase oscillators that does not need knowledge of the input signal frequency or phase. However, the model necessitates the computation of time averages in the network limiting the ease of applicability of the approach.

\subsection{Contributions of the paper}
The need for a formal understanding of the frequency adaptation mechanism to support its safe deployment in robotic, control and estimation applications is the main motivation for this paper. 
We provides a complete description of the frequency adaptation mechanism for the strongly coupled phase oscillator model \cref{eq:phaseafo_intro1,eq:phaseafo_intro2}. 
Through geometric perturbation theory, we show that a slow-fast dynamics is responsible for exponential frequency adaptation and that the oscillator can extract frequency components of any periodic signal. We derive a map summarizing the slow-fast dynamics, which accurately describe the frequency adaptation mechanism for complex input signals. Further we show how the convergence rate can be controlled and the associated trade-offs in terms of convergence accuracy. 
Finally, we extends the analysis to networks of coupled oscillators augmented with amplitude adaptation. We provide a geometric characterization of the slow-fast dynamics and numerically investigate the behavior for complex input signals.
\footnote{The software used for the numerical simulations of this article is available as open source in \cite{codelink}}

\section{Geometric structure of frequency adaptation}\label{sec:2}

In this section, we derive results for the strong coupling case using the adaptive frequency phase oscillator
\begin{align}
\dot{\phi} &= \lambda\omega - K \sin(\phi) F(t) \label{eq:adapt_phase1}\\
\dot{\omega} &= - K \sin(\phi) F(t) \label{eq:adapt_phase2}
\end{align}
where we introduce the parameter $\lambda >0$, a term enabling the explicit control of the frequency adaptation convergence rate (as we will prove below).
Note that the frequency of the oscillator is $\lambda \omega$. $F(t)$ is a time varying input signal which we assume to be $C^\infty$.

We rewrite the problem
as a singular perturbation problem that can be tackled by geometric singular perturbation theory \cite{fenischel79,jones1609geometric}. By setting $K = \frac{1}{\epsilon}$ (where $\epsilon \ll 1$) and making the system autonomous, we obtain
\begin{align}
\epsilon\dot{\phi} &= \epsilon\lambda\omega - \sin(\phi) F(\theta) \label{eq:phase1}\\
\epsilon\dot{\omega} &= - \sin(\phi) F(\theta) \label{eq:phase2}\\
\dot{\theta} &= 1
\end{align}
We study the dynamics, first by characterising slow locally invariant manifolds, then by analyzing the fast dynamics and finally by deriving discrete maps describing the average dynamics.

\subsection{Slow dynamics and invariant slow manifolds}
We first aim to characterize invariant slow manifolds using Fenichel theorem \cite{fenischel79}. A central hypothesis to apply this theorem is that
the critical manifold, i.e. the fixed points to \cref{eq:phase1,eq:phase2}, be normally hyperbolic. This is equivalent to requiring that the linearization of the dynamics at each point on the manifold
has as many 0 eigenvalues as there are slow variables \cite{jones1609geometric}.
The Jacobian of the dynamics has two 0 eigenvalues and so any invariant slow manifold cannot be hyperbolic.
The situation can be changed though the change of coordinates $\Omega = \phi - \omega$ to get
\begin{align}
\epsilon{\dot{\omega}} &= - \sin(\Omega + \omega) F(\theta) \label{eq:slow_dyn1}\\
\dot{\Omega}  &= \lambda\omega \label{eq:slow_dyn2}\\
\dot{\theta} &= 1 \label{eq:slow_dyn3}
\end{align}
which is equivalent to the following fast system
\begin{align}
\omega^{'} &= - \sin(\Omega + \omega) F(\theta)\label{eq:fast_dyn1}\\
\Omega^{'}  &= \epsilon\lambda\omega \label{eq:fast_dyn2}\\
\theta^{'} &= \epsilon \label{eq:fast_dyn3}
\end{align}
where we re-scaled time as $t = \epsilon \tau$ and $'=\frac{d}{d\tau}$.

We can now characterize the invariant slow manifolds and
provide a first order approximation of the flow on these manifolds.
The main result is summarized in the following theorem.

\begin{theorem}\label{th:slow_manifolds}
For $\epsilon$ sufficiently small, there exist infinitely many slow locally invariant manifolds for the flow \cref{eq:slow_dyn1,eq:slow_dyn2,eq:slow_dyn3}. They consist of simply connected,
compact subsets of $\mathbb{R}^3$ such that 
\begin{equation}\label{eq:M_epsilon}
\omega = (k\pi - \Omega)\left( 1+\frac{\epsilon (-1)^k \lambda}{F(\theta)} \right) + O(\epsilon^2), \quad F(\theta) \neq 0, \quad k\in \mathbb{Z}
\end{equation}
The manifolds are attracting when $(-1)^{k+1}F(\theta)<0$ and repelling otherwise.
The slow flow on these manifolds is
\begin{align}
\omega &= -\Omega(0) \mathrm{e}^{-\lambda t} + O(\epsilon)\\
\Omega &= k\pi + \Omega(0) \mathrm{e}^{-\lambda t} + O(\epsilon)\\
\theta &= t
\end{align}
\end{theorem}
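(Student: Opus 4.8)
The plan is to read \cref{eq:fast_dyn1,eq:fast_dyn2,eq:fast_dyn3} as a genuine fast--slow system with the single fast variable $\omega$ and the two slow variables $(\Omega,\theta)$, and then to apply Fenichel's theorem on suitable compact pieces. Setting $\epsilon=0$ in the fast system freezes $\Omega$ and $\theta$ and leaves $\omega'=-\sin(\Omega+\omega)F(\theta)$, so the critical set is $\{\sin(\Omega+\omega)F(\theta)=0\}$; on any region where $F(\theta)\neq0$ this is exactly the union over $k\in\mathbb{Z}$ of the sheets $M_0^k=\{\omega=k\pi-\Omega\}$, each a smooth graph over the $(\Omega,\theta)$--plane.

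First I would verify normal hyperbolicity. The linearization of the fast equation in the $\omega$--direction at a point of $M_0^k$ is $\partial_\omega\big(-\sin(\Omega+\omega)F(\theta)\big)=-\cos(k\pi)F(\theta)=-(-1)^kF(\theta)$, which is nonzero precisely when $F(\theta)\neq0$. Hence, fixing $k$ and any closed box $\mathcal{K}=[\Omega_-,\Omega_+]\times I$ with $I$ a compact $\theta$--interval on which $F$ is bounded away from $0$, the corresponding piece of $M_0^k$ is a compact, simply connected, normally hyperbolic critical manifold, attracting where $-(-1)^kF(\theta)<0$ (i.e. $(-1)^{k+1}F(\theta)<0$) and repelling otherwise. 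Fenichel's theorem then yields, for $\epsilon$ small, a locally invariant manifold $M_\epsilon^k$ that is a $C^\infty$ graph $\omega=h(\Omega,\theta,\epsilon)$, $O(\epsilon)$--close to $M_0^k$ and inheriting its stability type; letting $k$ range over $\mathbb{Z}$ produces the infinite layered family. This already gives every structural claim of the theorem except the explicit expansions.

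Next I would compute the asymptotics by the standard invariance/matching argument. Requiring $\omega(t)=h(\Omega(t),\theta(t),\epsilon)$ along the slow flow \cref{eq:slow_dyn1,eq:slow_dyn2,eq:slow_dyn3} and clearing the factor $\epsilon$ gives the invariance equation
\begin{equation}
-\sin\big(\Omega+h\big)F(\theta)=\epsilon\big(\lambda h\,\partial_\Omega h+\partial_\theta h\big).
\end{equation}
Writing $h=h_0+\epsilon h_1+O(\epsilon^2)$, the $O(1)$ balance forces $\sin(\Omega+h_0)=0$, i.e. $h_0=k\pi-\Omega$, while the $O(\epsilon)$ balance, using $\sin(k\pi+x)=(-1)^k x+O(x^3)$ and $\partial_\Omega h_0=-1$, $\partial_\theta h_0=0$, reads $-(-1)^kF(\theta)\,h_1=-\lambda(k\pi-\Omega)$, so $h_1=(-1)^k\lambda(k\pi-\Omega)/F(\theta)$. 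Collecting the two terms gives exactly \cref{eq:M_epsilon}; that this regular expansion is legitimate (and that $h$ is $C^\infty$ in $\epsilon$) follows from the smoothness statement in Fenichel's theorem together with $F\in C^\infty$.

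Finally, the slow flow is obtained by substituting the graph into $\dot\Omega=\lambda\omega$, $\dot\theta=1$, giving to leading order the linear system $\dot\Omega=\lambda(k\pi-\Omega)+O(\epsilon)$, $\dot\theta=1$; integrating with $\theta(0)=0$ yields exponential relaxation of $\Omega$ to $k\pi$ at rate $\lambda$ and hence, via $\omega=k\pi-\Omega+O(\epsilon)$, the formulas displayed in the theorem. The main obstacle is precisely the exclusion of the zeros of $F(\theta)$: normal hyperbolicity degenerates there, so Fenichel only delivers \emph{locally} invariant manifolds over compact $\theta$--intervals lying between consecutive zeros of $F$, which is why the statement is phrased in terms of compact subsets of $\mathbb{R}^3$ and why the family is "layered." Stitching these pieces together --- in particular describing how trajectories are forced off one sheet and onto another as $\theta$ passes a zero of $F$ --- is a separate matter, handled by the fast (jump) analysis later in this section.
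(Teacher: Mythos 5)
Your proposal is correct and follows essentially the same route as the paper's proof: identify the critical sheets $\omega+\Omega=k\pi$, check normal hyperbolicity via the nonzero transverse eigenvalue $(-1)^{k+1}F(\theta)$ (excluding the zeros of $F$), invoke Fenichel for the $O(\epsilon)$-close locally invariant graphs, and obtain the first-order correction by a regular expansion of the invariance equation. Your explicit derivation of $h_1$ from the invariance/matching condition is in fact more detailed than the paper's "direct computations show," and the concluding remarks on the slow flow and on why the manifolds are only locally invariant agree with the paper.
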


\begin{proof}
First we compute the critical manifolds when $\epsilon=0$. 
They will be such that $- \sin(\Omega + \omega) F(\theta)=0$. 
We can consider two cases, either $\Omega + \omega = k\pi$, $k\in \mathbb{Z}$ or $F(\theta)= 0$.
The Jacobian of the fast ODE at $\epsilon = 0$ is
\begin{equation}
J = \begin{bmatrix}
-\cos(\Omega + \omega)F(\theta) & -\cos(\Omega + \omega)F(\theta) & -\sin(\Omega + \omega) \frac{\partial F(\theta)}{\partial \theta} \\
0 & 0 & 0 \\
0 & 0 & 0
\end{bmatrix} \nonumber
\end{equation}
which taken at $F(\theta)=0$ has three zero eigenvalues so the corresponding critical manifold is not be hyperbolic.
Taken at $\Omega + \omega = k\pi$, the Jacobian has eigenvalues $0, 0$ and $(-1)^{k+1}F(\theta)$
with respective eigenvectors
\begin{equation}
\left( \begin{array}{c} -1 \\ 1 \\ 0 \end{array} \right)
\left( \begin{array}{c}  0  \\ 0 \\ 1 \end{array} \right)
\left( \begin{array}{c} 1 \\ 0 \\ 0 \end{array} \right) \nonumber
\end{equation}
so the direction transverse to the critical manifolds has a non-zero eigenvalue as long as $F(\theta)\neq 0$
and therefore the critical manifolds are hyperbolic.
The sign of $(-1)^{k+1}F(\theta)$ defines the attracting or repelling nature of the slow invariant manifolds.
These manifolds consist of any simply-connected, compact subsets of $\mathbb{R}^3$ such that
\begin{equation}
M_{0} \subset \{(\omega, \Omega, \theta) | \omega + \Omega = k\pi, F(\theta) \neq 0,\  k \in \mathbb{Z}\}
\end{equation}
We can invoke Fenichel theorem \cite{fenischel79,jones1609geometric} as the vector field is $C^\infty$ and the critical manifold is normally hyperbolic. We then conclude that for each manifold of type $M_{0}$ and for $\epsilon$ sufficiently small,
there exists a manifold $M_{\epsilon}$ that lies within $O(\epsilon)$ of $M_0$, that is diffeomorphic to $M_{0}$ and locally invariant to the flow of Equations \cref{eq:slow_dyn1}-\cref{eq:slow_dyn3}. The $\epsilon$ perturbation preserves
the attracting/repelling property of the manifold.

Using the characterization of the critical manifolds and the fast variable equation, 
\begin{equation}
\epsilon \dot{\omega}(\Omega,\theta,\epsilon) = -\sin(\Omega + \omega(\Omega,\theta,\epsilon)) F(\theta)
\end{equation}
we can write $\omega$ as a perturbation series in $\epsilon$ and match orders. Using at first order $\omega_0 = k\pi - \Omega$, direct computations show that
\begin{equation}
\omega = (k\pi - \Omega)\left( 1+\frac{\epsilon (-1)^k \lambda}{F(\theta)} \right) + O(\epsilon^2)
\end{equation}

The slow flow on these manifolds can be written
\begin{equation}
\dot{\Omega} = \lambda (k\pi - \Omega ) + O(\epsilon)
\end{equation}
which is linear at first order and therefore $\Omega(t) = k\pi + \Omega(0) e^{-\lambda t} + O(\epsilon)$ on the manifold.
Since $\Omega + \omega = k\pi + O(\epsilon)$ we also have $\omega(t) = - \Omega(0) e^{-\lambda t} + O(\epsilon)$, which finishes the proof. \qquad
\end{proof}

We have characterized the locally invariant slow manifolds of the system. In the $(\omega, \Omega)$ direction there is an alternation of attracting and repelling manifolds. In the $\theta$ direction, attracting and repelling invariant manifolds are separated by $F(\theta)=0$. Therefore, each time $F(\theta)$ changes sign, the flow in the neighborhood of an attracting manifold moves into the neighborhood of a repelling one and vice versa.

The $O(\epsilon)$ approximation of the slow manifold is important to understand where the flow exits an attracting manifold, with respect to a neighboring repelling manifold. Indeed, \cref{eq:M_epsilon} 
shows that for fixed $k$ on an attracting manifold we have $|\omega| > |k\pi - \Omega|$ and on
a repelling manifold $|\omega| < |k\pi - \Omega|$. Therefore, for a given $k$ and for $F(\theta)$ small, any pair of associated
attracting and repelling manifolds are on top of each other with always the same relative position as long as $k\pi - \Omega$
does not change sign.
When $F(\theta)$ changes sign, the flow on an attracting manifold is now in a neighborhood of a repelling one
and its position relative to the repelling one is always the same.
\cref{fig:manifolds} illustrates this interleaved structure.

A singular orbit on the critical manifolds is such that $\omega$ converges exponentially fast to $0$ with convergence rate $\lambda$ while $\Omega$ displays the same kind of convergence towards $k\pi$.
Moreover, on the critical manifolds the original system \cref{eq:adapt_phase1}-\cref{eq:adapt_phase2} is such that $\phi = k\pi + O(\epsilon)$, i.e. the slow dynamics is such that the phase of the oscillator is close to a constant.

\begin{figure}
\centering
\includegraphics[width=0.49\textwidth, trim=300 0 100 150, clip]{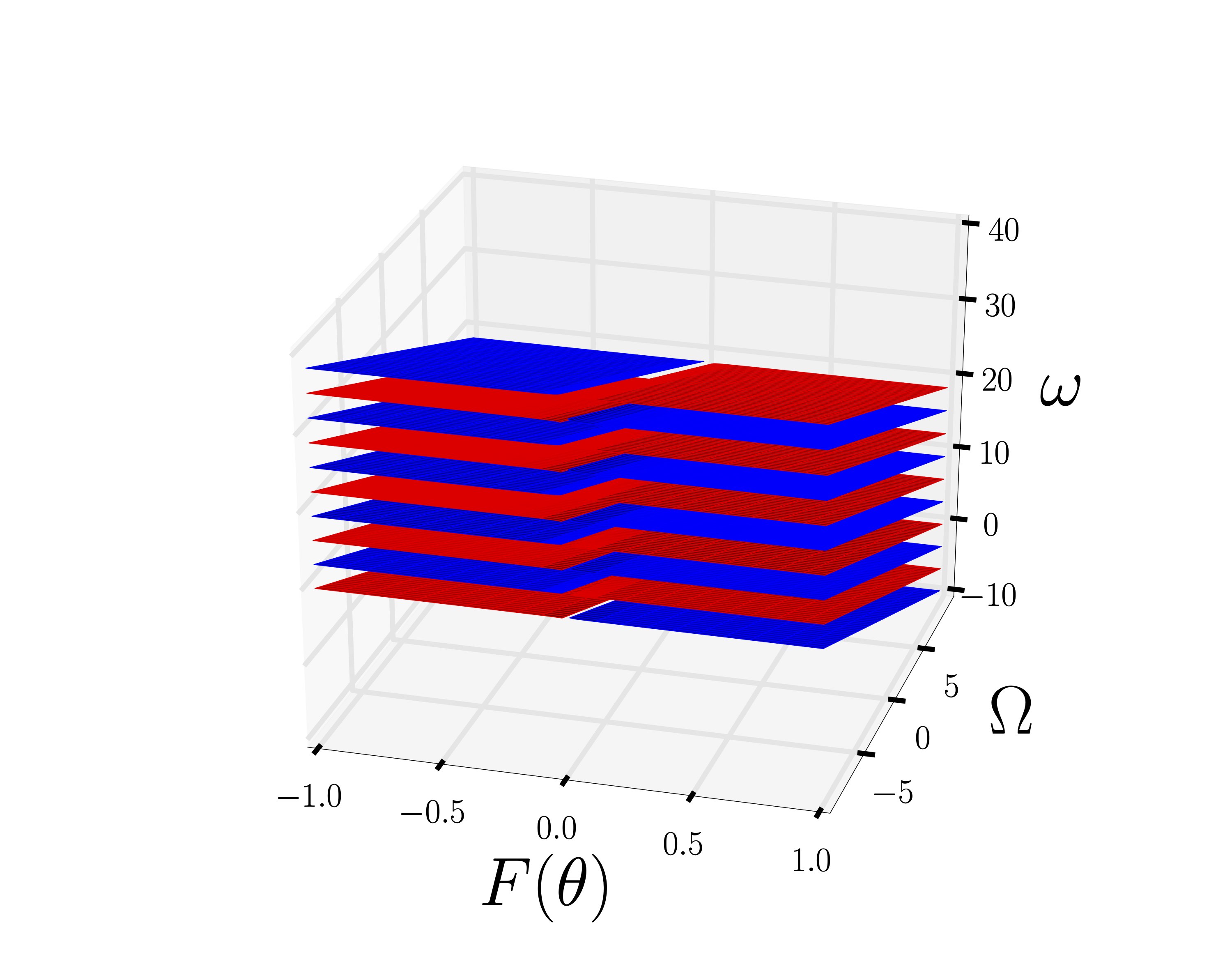}
\includegraphics[width=0.49\textwidth]{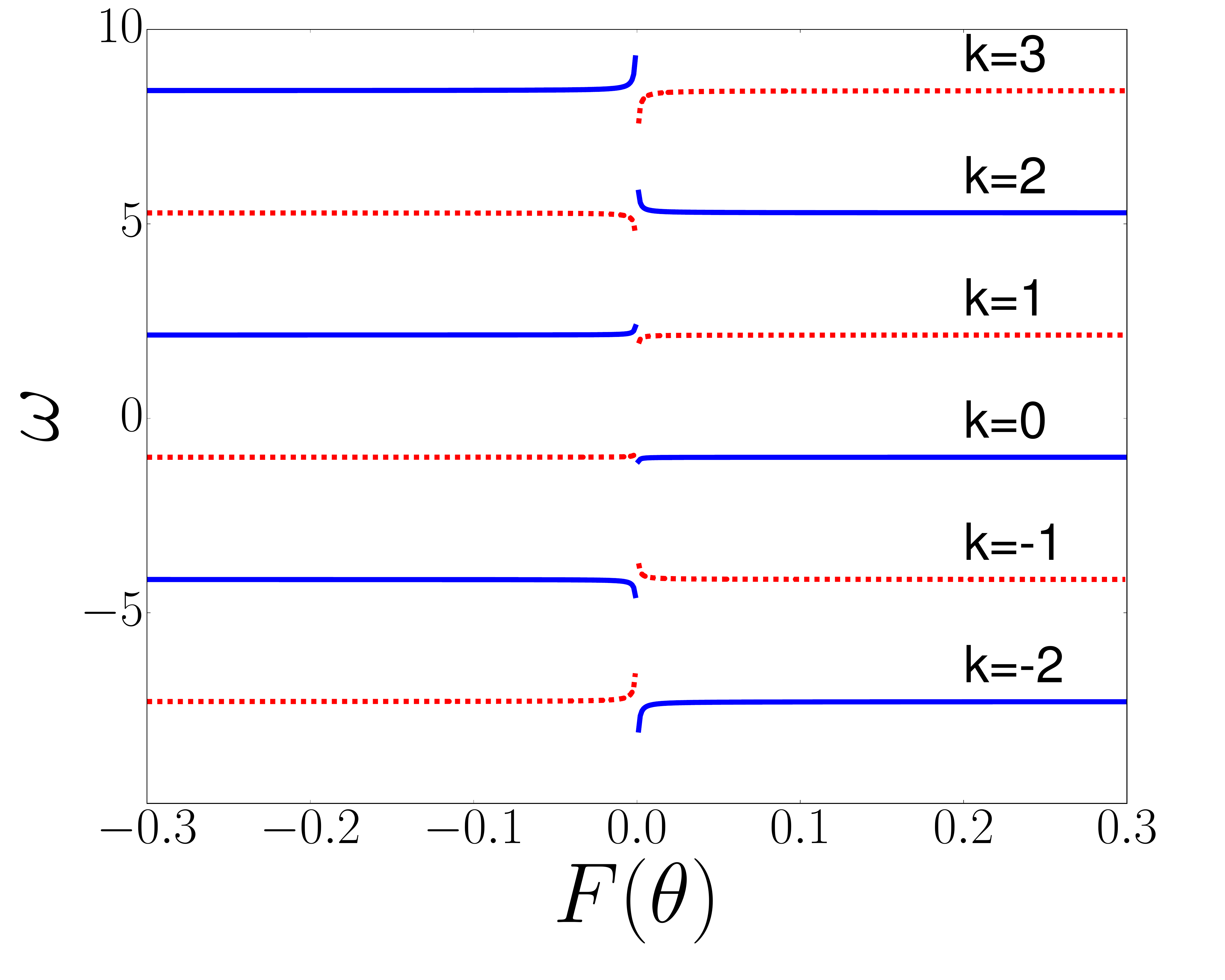}
\caption{$O(\epsilon)$ approximation of the slow invariant manifolds (repelling manifolds with dashed red lines and attracting ones with plain blue lines) as defined by \cref{eq:M_epsilon}.
The left graph shows a plot of $\omega$ as a function of $F(\theta)$ and $\Omega$ for $k = \{0,1,\cdots,10\}$, $\epsilon=10^{-4}$ and $\lambda=1$. We notice the alternation of repelling and attracting manifolds separated by $F(\theta)=0$. The right graph shows a 2D cut of the left graph for $\Omega=1$. On this graph we can see that in a neighborhood of $F(\theta)=0$ the attracting manifolds rapidly curve away from $\omega=0$ while the repelling ones curve towards it. This structure is important to understand how the flow of $\omega$ moves between attracting manifolds away from $\omega=0$ (e.g. when exiting a slow manifold when $F(\theta)$ changes sign, the flow is above the repelling manifold when $\omega>0$ and the fast flow will then converge to the attracting manifold on top of the repelling one).
}\label{fig:manifolds}
\end{figure}

\subsection{Fast dynamics}
The critical orbits of the fast dynamics are the solutions
of
\begin{align}
\omega' &= - \sin(\Omega + \omega) F(\theta) \\
\Omega'  &= 0 \\
\theta' &= 0
\end{align}
The fixed points of the fast dynamics correspond to the critical manifolds $M_0$, i.e. points of the form $\Omega + \omega = k\pi$. Such a fixed point is stable if the corresponding critical manifold is attracting, and unstable otherwise. We conclude that a critical orbit 
will then flow from the neighborhood of a repelling manifold to an attracting manifold. Due to the time reparametrization, the time scale associated
with the fast dynamics is controlled by the coupling constant $\epsilon$, meaning that the time taken to converge
to an attracting manifold is shorter as $\epsilon$ decreases. For the critical orbits, we assume that this happens instantaneously compared to the slow dynamics.

If the system starts in the neighborhood of a repelling manifold, the fast event makes it converge towards an attracting
manifold with a net variation of $\omega+\Omega$ of $\pm \pi$.
Because of the relative positions of the manifolds (cf. \cref{fig:manifolds}), when $F(\theta)$ changes sign the flow in a neighborhood of an attracting manifold moves to a neighborhood
of a repelling one such that $\omega + \Omega$ changes away from 0.
For example, in the case presented in \cref{fig:manifolds} (right graph),
$\omega+\Omega$ will increase by $\pi$ when $\omega>0$ and decrease by $\pi$ otherwise.
In the coordinates of the original system \cref{eq:adapt_phase1,eq:adapt_phase2}, it means that the phase $\phi$ will quickly change by $\pm \pi$.

\subsection{Convergence for periodic inputs}\label{sec:simple_cosine_analysis}
Thus far we characterized the singular orbits of the slow and fast  dynamics separately. We now piece these singular orbits together to explain how the succession of slow-fast events leads to an adaptive frequency mechanism. When $F(t)$ is periodic, the slow flow exits periodically the locally invariant manifold (i.e. each time $F(\theta)$ changes sign).
On or near an attracting manifold, the critical slow flow is such that $\omega$ converges exponentially fast to $0$ and close to a repelling invariant manifold, the critical fast orbit is such that $\omega$ increases by $\pi$.

\subsubsection{The case $F(t) = \cos(\omega_F t)$}
In the case of a simple periodic input $F(t) = \cos(\omega_F t)$, 
after one slow-fast event $\omega$ changes as
\begin{equation}
\omega_{n+1}^+ = \omega_n^+ \mathrm{e}^{-\frac{\lambda\pi}{\omega_F}} + \pi \label{eq:omega_discrete_p}
\end{equation}
where the duration of the slow event is assumed to be $t = \frac{\pi}{\omega_F}$, i.e. half of a period of the input. As we consider the critical orbits ($\epsilon=0$), we assume that the fast change of $\omega$ is instantaneous.
Similarly, the total change for $\omega$ after a succession of a fast and then a slow event is
\begin{equation}
\omega_{n+1}^- = (\pi + \omega_n^- )\mathrm{e}^{-\frac{\lambda\pi}{\omega_F}} \label{eq:omega_discrete_m}
\end{equation}
Both difference equations are linear and have only one globally stable fixed point
\begin{equation}
\bar{\omega}^+ = \frac{\pi}{1-\mathrm{e}^{-\frac{\lambda \pi}{\omega_F}}} 
\qquad
\bar{\omega}^- = \frac{\pi}{\mathrm{e}^{\frac{\lambda \pi}{\omega_F}}-1}
\end{equation}

\begin{lemma}
The fixed points $\bar{\omega}^+$ and $\bar{\omega}^-$ are globally asymptotically stable and are such that
$\omega_F < \lambda \bar{\omega}^+ < \omega_F + \lambda \pi $ and
$\omega_F -\lambda\pi < \lambda \bar{\omega}^- < \omega_F$.
Moreover, if $\frac{\lambda\pi}{2\omega_F} \ll 1$ (i.e. we assume a separation of time scales such that 
$F(t)$ changes sign at a faster rate
than the decay rate on the slow manifold),
the average $\tilde{\omega} = \frac{\bar{\omega}^+ + \bar{\omega}^-}{2}$
is
\begin{equation}
\lambda\tilde{\omega} \simeq \omega_F + O(\frac{\lambda\pi}{2\omega_F})
\end{equation}
\end{lemma}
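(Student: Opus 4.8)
The plan is to treat both difference equations as affine contractions and then reduce everything to a handful of elementary inequalities for the exponential. First I would set $\mu = \mathrm{e}^{-\lambda\pi/\omega_F}$, so that $0<\mu<1$ and the recursions \cref{eq:omega_discrete_p,eq:omega_discrete_m} read $\omega_{n+1}^+ = \mu\,\omega_n^+ + \pi$ and $\omega_{n+1}^- = \mu\,(\omega_n^- + \pi)$. Subtracting the corresponding fixed-point relations gives $\omega_{n+1}^\pm - \bar\omega^\pm = \mu\,(\omega_n^\pm - \bar\omega^\pm)$, hence $|\omega_n^\pm - \bar\omega^\pm| = \mu^n\,|\omega_0^\pm - \bar\omega^\pm|\to 0$ for every initial condition, which is global asymptotic stability. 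One also checks directly that $\bar\omega^+ = \pi/(1-\mu)$ and $\bar\omega^- = \pi\mu/(1-\mu)$ are the unique solutions of the fixed-point equations, matching the stated closed forms.

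Next I would establish the two-sided bounds. Writing $a = \lambda\pi/\omega_F>0$, we have $\lambda\bar\omega^+ = \lambda\pi/(1-\mathrm{e}^{-a})$ and $\lambda\bar\omega^- = \lambda\pi/(\mathrm{e}^{a}-1)$. Since $1-\mathrm{e}^{-a}>0$ and $\mathrm{e}^a-1>0$, clearing denominators turns each of the four desired inequalities into one of the two classical strict inequalities $1+a<\mathrm{e}^a$ and $1-a<\mathrm{e}^{-a}$ (the latter equivalent to $1-\mathrm{e}^{-a}<a$), valid for all $a>0$ because $y=1+x$ is the tangent line to the strictly convex curve $y=\mathrm{e}^x$ at the origin. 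Concretely: $\omega_F<\lambda\bar\omega^+$ becomes $1-\mathrm{e}^{-a}<a$; both $\lambda\bar\omega^+<\omega_F+\lambda\pi$ and $\lambda\bar\omega^-<\omega_F$ collapse to $1+a<\mathrm{e}^a$; and $\omega_F-\lambda\pi<\lambda\bar\omega^-$ collapses to $1-a<\mathrm{e}^{-a}$. The only (very mild) obstacle is the algebraic bookkeeping here: tracking which inequality direction is preserved on dividing by the positive quantities and verifying that all four statements reduce to those two templates.

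Finally, for the averaged frequency I would compute $\lambda\tilde\omega = \tfrac12\big(\lambda\bar\omega^+ + \lambda\bar\omega^-\big) = \tfrac{\lambda\pi}{2}\cdot\tfrac{1+\mu}{1-\mu}$, and then use the identity $\tfrac{1+\mathrm{e}^{-a}}{1-\mathrm{e}^{-a}} = \coth(a/2)$ to obtain the closed form $\lambda\tilde\omega = \tfrac{\lambda\pi}{2}\coth\!\big(\tfrac{\lambda\pi}{2\omega_F}\big)$. Setting $x=\lambda\pi/(2\omega_F)\ll 1$ and using $\coth x = \tfrac1x + \tfrac x3 + O(x^3)$ gives $\lambda\tilde\omega = \omega_F + \tfrac{\omega_F}{3}x^2 + O(\omega_F x^4)$, so in particular $\lambda\tilde\omega \simeq \omega_F + O\!\big(\tfrac{\lambda\pi}{2\omega_F}\big)$ as claimed, with the correction in fact being quadratically small in the separation-of-time-scales parameter. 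This last computation is the only place the hypothesis $\lambda\pi/(2\omega_F)\ll 1$ is used.
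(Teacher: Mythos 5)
Your proposal is correct and follows essentially the same route as the paper: contraction of the affine maps for global stability, the tangent-line inequalities $1+a<\mathrm{e}^a$ and $1-a<\mathrm{e}^{-a}$ (equivalent to the paper's $x+1<\mathrm{e}^x<\tfrac{1}{1-x}$) for the two-sided bounds, and the identity $\tilde{\omega}=\tfrac{\pi}{2}\coth\bigl(\tfrac{\lambda\pi}{2\omega_F}\bigr)$ with its series expansion for the average. Your version is slightly more careful (it avoids the restriction $x<1$ and notes that the correction to $\lambda\tilde{\omega}$ is in fact quadratic in $\tfrac{\lambda\pi}{2\omega_F}$), but the substance is identical.
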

\begin{proof}
Globally stability is direct since we have linear maps with a contracting coefficient $\mathrm{e}^{-\frac{\lambda\pi}{\omega_F}} < 1$.
By using the fact that $x+1 < \mathrm{e}^x < \frac{1}{1-x}$ when $x<1$ we find that 
$\omega_F < \lambda \bar{\omega}^+ < \omega_F + \lambda \pi $.
This directly leads to 
$\omega_F -\lambda\pi < \lambda \bar{\omega}^- < \omega_F$.
We have
\begin{equation}
\tilde{\omega} = \frac{\bar{\omega}^+ + \bar{\omega}^-}{2} = \frac{\pi}{2} \coth(\frac{\lambda\pi}{2\omega_F})
\end{equation}
Assuming that $\frac{\lambda\pi}{2\omega_F} \ll 1$, the series expansion of $\coth$ leads at first order to $\lambda\tilde{\omega} = \omega_F + O(\frac{\lambda\pi}{2\omega_F})$
\end{proof}

It is remarkable that this succession of slow-fast events
leads to an exponential convergence of $\lambda\omega$ to a neighborhood of $\omega_F$,
when there is a clear separation of time scale between the frequency of the input
$F(t)$ and the convergence rate $\lambda$.
Indeed, the system does not have explicit access to $\omega_F$ and it is really the timing of these events due to the zero-crossing of the input that induces convergence to the frequency of the input.
After convergence, $\omega$ oscillates at a frequency of $2\omega_F$ which is the frequency at which $F(t)$ changes sign.
The amplitude of oscillation for $\omega$ is $\pi$ which is the amount of change during a fast event (i.e.
$\bar{\omega}^+ - \bar{\omega}^- = \pi$).
Therefore, the precision at which we can recover $\omega_F$ from $\omega$ depends on the choice of the convergence rate $\lambda$ and
the bounds for the precision are of the form $\lambda\pi$.
Since each step in the difference equations corresponds to the evolution of $\omega$ for
a time $t = \frac{\pi}{\omega_F}$, the average convergence of $\omega$ will be of the form
\begin{equation}
\omega(t) \simeq (\omega - \tilde{\omega})\mathrm{e}^{-\lambda t} + \tilde{\omega} \label{eq:exp_convergence}
\end{equation}

\Cref{fig:convergence_prediction1} shows a typical evolution of the adaptive frequency oscillator with simple periodic input.
The figure also shows the very good correspondence of the predicted bounds $\omega_n^+$ and $\omega_n^-$ (\cref{eq:omega_discrete_p,eq:omega_discrete_m}) derived from the critical orbits with
the real evolution of $\omega$, as well the average exponential convergence \cref{eq:exp_convergence}.
In particular we see that $\lambda \tilde{\omega} = 100.008$ is very close to $\omega_F$. We note also that the continuous exponential convergence prediction, \cref{eq:exp_convergence}, gives a good approximation of the average dynamics.

\begin{figure}
\centering
\includegraphics[width=0.75\textwidth, trim=50 0 50 0, clip]{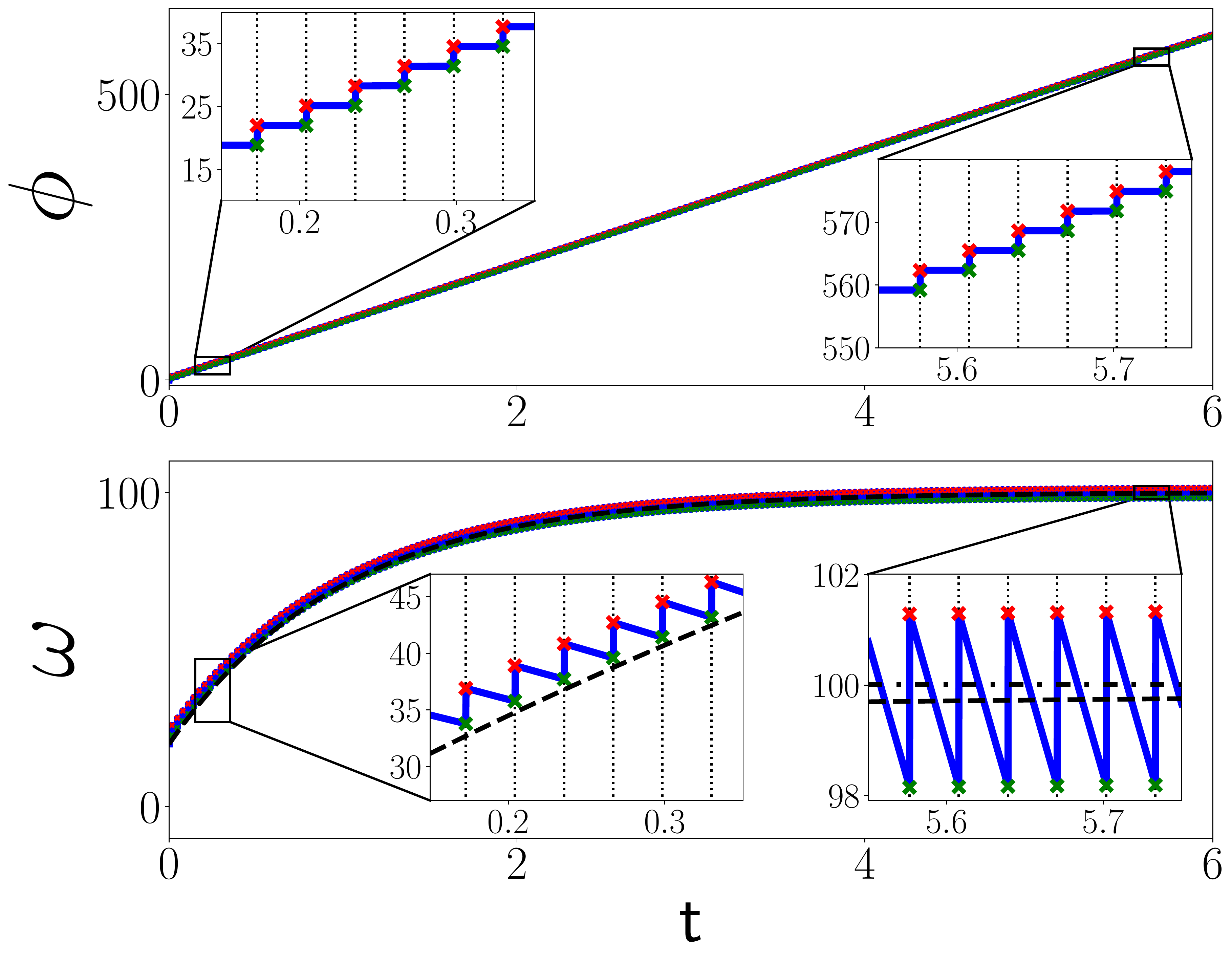}
\caption{Time evolution of the adaptive frequency oscillator for $F(t) = \cos(\omega_F t)$, where $\lambda =1$, $K=10^7$, $\omega_F=100$. The upper graph shows the evolution of $\phi(t)$ (blue line) and the prediction $\phi_n^+$ (red cross) and $\phi_n^-$ (green cross). In the zoomed plots, the vertical dashed lines correspond to $F(t) = 0$.
The lower graph shows the evolution of $\omega(t)$ (blue line), the predictions $\omega_n^+$ (red cross) an $\omega_n^-$ (green cross) as well as $(\omega - \tilde{\omega}) \mathrm{e}^{-\lambda t} + \tilde{\omega}$ (dashed line). In the zoomed plots, the vertical dashed lines correspond to $F(t) = 0$. In addition, the right zoomed plot shows $\tilde{\omega} = 100.008$ (dashed-dotted line).}\label{fig:convergence_prediction1}
\end{figure}

\subsubsection{General periodic functions}
We can use a similar analysis for more general functions. 
Indeed, we can predict $\omega$ after one slow-fast event (i.e. one zero-crossing of an arbitrary $F(t)$) with the discrete maps
\begin{align}
\omega^+_{n+1} &= \omega^+_n \mathrm{e}^{-\lambda \Delta_{t_i}} + \pi \label{eq:discrete_maps_ti1}\\
\omega^-_{n+1} &= (\pi+\omega^+_n) \mathrm{e}^{-\lambda \Delta_{t_i}}\label{eq:discrete_maps_ti2}
\end{align}
where $\Delta_{t_i}$ is the time between two zero crossings of $F(t)$. 

Without loss of generality, let's assume a periodic function
$F(t)$ of period $\frac{2\pi}{\omega_F}$ such that $F(t=0)=0$. Let's denote $t_i$, $i = 1 \cdots N$ the instants for which
the function is zero during one period (i.e. $t_i >0$ and $t_N = \frac{2\pi}{\omega_F}$). 
Combining the $N$ slow-fast events described by the maps \cref{eq:discrete_maps_ti1,eq:discrete_maps_ti2}, we can write two discrete linear maps of the evolution of $\omega$ after $N$ slow-fast events, i.e. a complete period of the input
\begin{align}
\hat{\omega}_{n+1}^+ &= \hat{\omega}_n^+ \mathrm{e}^{-\lambda \frac{2\pi}{\omega_F}} + \pi \mathrm{e}^{-\lambda \frac{2\pi}{\omega_F}} \sum_{i=1}^N \mathrm{e}^{\lambda t_i}\\
\hat{\omega}_{n+1}^- &= \hat{\omega}_n^- \mathrm{e}^{-\lambda \frac{2\pi}{\omega_F}} + \pi \mathrm{e}^{-\lambda \frac{2\pi}{\omega_F}} 
\left( 1 + \sum_{i=1}^{N-1} \mathrm{e}^{\lambda t_i} \right)
\end{align}
whose respective unique exponentially stable fixed points are
\begin{align}
\bar{\omega}^+ &= \frac{\pi}{\mathrm{e}^{\lambda \frac{2\pi}{\omega_F}}-1} \sum_{i=1}^N \mathrm{e}^{\lambda t_i}\\
\bar{\omega}^- &= \frac{\pi}{\mathrm{e}^{\lambda \frac{2\pi}{\omega_F}}-1} \left(1 + \sum_{i=1}^{N-1} \mathrm{e}^{\lambda t_i} \right)
\end{align}
As for the case of a simple cosine, we notice the exponential convergence towards the fixed points with
 rate controlled by $\lambda$. As before, we have $\bar{\omega}^+ - \bar{\omega}^- = \pi$.
 Note that these maps describe the evolution of $\omega$ over a complete period of $F(t)$, including several slow-fast events and that the actual dynamics of $\omega$ is not necessarily bounded between these two maps. The maps describing one slow-fast event \cref{eq:discrete_maps_ti1,eq:discrete_maps_ti2} need to be used instead if one needs to compute bounds on $\omega$.
The maps $\hat{\omega}_n^+$ and $\hat{\omega}_n^-$ however enable to derive the following result.
\begin{lemma}
The average $\tilde{\omega} = \frac{\bar{\omega}^+ + \bar{\omega}^-}{2}$ is such that 
\begin{equation}
\lim_{\lambda\to 0} \lambda \tilde{\omega} = \omega_F \frac{N}{2}
\end{equation}
where $N$ is the number of times the input signal changes sign over one period.
\end{lemma}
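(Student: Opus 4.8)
The plan is to compute the limit directly from the closed-form fixed points $\bar\omega^+$ and $\bar\omega^-$ already derived. Setting $a=\frac{2\pi}{\omega_F}$ and adding the two expressions, we obtain
\begin{equation}
\lambda\tilde\omega = \frac{\lambda\pi}{2\,(\mathrm{e}^{\lambda a}-1)}\left( 1 + \sum_{i=1}^{N-1}\mathrm{e}^{\lambda t_i} + \sum_{i=1}^{N}\mathrm{e}^{\lambda t_i} \right),
\end{equation}
and the strategy is simply to pass to the limit $\lambda\to 0$ in the two factors separately.

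First I would handle the scalar prefactor: since $\mathrm{e}^{\lambda a}-1 = \lambda a + O(\lambda^2)$ (or equivalently by l'H\^opital's rule), $\frac{\lambda}{\mathrm{e}^{\lambda a}-1}\to \frac{1}{a}=\frac{\omega_F}{2\pi}$. Next, because the number of sign changes $N$ is finite and each $t_i$ is a fixed constant, every $\mathrm{e}^{\lambda t_i}\to 1$, so the bracketed sum converges to $1+(N-1)+N = 2N$. Multiplying the two limits yields $\lim_{\lambda\to0}\lambda\tilde\omega = \frac{\pi}{2}\cdot\frac{\omega_F}{2\pi}\cdot 2N = \frac{\omega_F N}{2}$, which is the claim. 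A one-line refinement of the same computation gives $\lambda\tilde\omega = \frac{\omega_F N}{2} + O(\lambda)$, and specializing to $F(t)=\cos(\omega_F t)$, where $N=2$, recovers the earlier lemma $\lambda\tilde\omega\to\omega_F$.

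This is essentially a routine limit evaluation, so I do not expect a genuine obstacle; the points that warrant a line of care are (i) that $N<\infty$, so the sums are finite and interchange of limit and summation is immediate; (ii) the bookkeeping of the lone constant term in $\bar\omega^-$ — the "$1$" from the final fast event — together with the $N-1$ and $N$ exponentials, since this is precisely what produces the factor $2N$ rather than $2N-1$; and (iii) that the statement concerns the idealized critical maps ($\epsilon=0$), hence describes the limiting slow–fast dynamics rather than a direct assertion about trajectories of the original ODE.
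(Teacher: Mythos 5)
Your proposal is correct and follows essentially the same route as the paper: sum the two closed-form fixed points and evaluate the limit of the prefactor (the paper invokes l'H\^opital's rule where you use the expansion $\mathrm{e}^{\lambda a}-1=\lambda a+O(\lambda^2)$, which is equivalent), with each exponential in the finite sum tending to $1$ to give the factor $2N$. No gaps.
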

\begin{proof}
The average is
\begin{equation}
\tilde{\omega} = \frac{\bar{\omega}^+ + \bar{\omega}^-}{2} 
= \frac{\pi}{2(\mathrm{e}^{\lambda\frac{2\pi}{\omega_F}}-1)} \left(1 + \mathrm{e}^{\lambda\frac{2\pi}{\omega_F}}+2\sum_{i=1}^{N-1}\mathrm{e}^{\lambda t_i} \right)
\end{equation}
The rest of the proof follows from L'H\^opital's rule.
\end{proof}

In \Cref{sec:simple_cosine_analysis} we saw that $\lambda$ was a term that not only controlled the convergence
rate but also the precision at which $\omega_F$ could be recovered. $\lambda \to 0$ can be interpreted as the limit
for the best recovery of the input frequency. If $F(t)$ is periodic and continuous then $N$ is even as long as the zeros of $F(t)$ are not one of its extrema. Therefore,
when $\lambda\to 0$, the adaptive frequency oscillator will converge to an integer multiple of the fundamental frequency 
depending on the number of zeros of $F(t)$.

To illustrate this result, we simulate the system with the periodic input $F(t) = 1.3\cos(30t$\\ $+ 0.4) + \cos(60t) + 1.4\cos(90t +1.3)$ which has 4 zeros. Our lemma predicts that $\omega$ should converge in a neighborhood of $60$ (and not towards the frequency of the input which is $30$). \Cref{fig:conv_predict_period3} shows the results of the simulation, which confirms the prediction. 
Simulation results also show that every slow fast event can be accurately predicted using the discrete maps \cref{eq:discrete_maps_ti1,eq:discrete_maps_ti2}, suggesting that the 
description of the dynamics using the critical orbits is sufficient to capture the main features of the dynamical behavior of the system.

\begin{figure}
\centering
\includegraphics[width=0.75\textwidth, trim=50 0 50 0, clip]{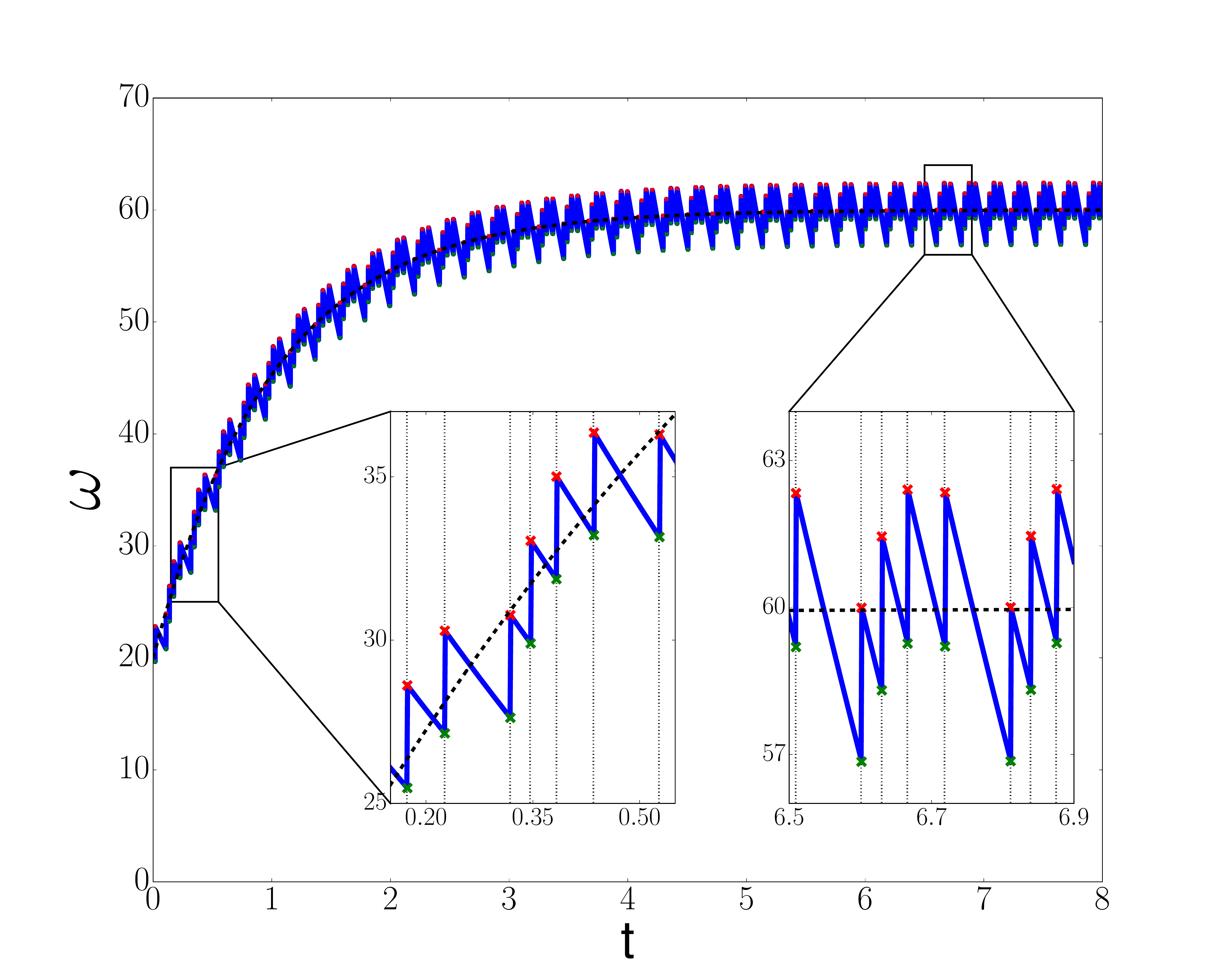}
\caption{Time evolution of the adaptive frequency oscillator for $F(t) = 1.3\cos(30t + 0.4) + \cos(60t) + 1.4\cos(90t +1.3)$, where $\lambda =1$, $K=10^6$, $\omega_F=30$. The graph shows the evolution of $\omega(t)$ (blue line), the predictions $\omega_n^+$ (red cross) an $\omega_n^-$ (green cross) taken from \cref{eq:discrete_maps_ti1,eq:discrete_maps_ti2} as well as $(\omega - \tilde{\omega}) \mathrm{e}^{-\lambda t} + \tilde{\omega}$ (dashed line). In the zoomed plots, the vertical dashed lines correspond to $F(t) = 0$.}\label{fig:conv_predict_period3}
\end{figure}

\subsubsection{Strictly positive inputs}
The frequency adaptation mechanism relies on the interaction between slow and fast dynamics
and it is driven by the sign changes of the external input.
Therefore, if $F(t)$ is a periodic signal that never changes sign then we expect no frequency adaptation.
After a transient, the flow will be on an attracting slow manifold and $\omega(t) \to 0$ (i.e. the oscillator will adapt to the "zero frequency" of the DC bias).
This is an important remark for real applications as input signals need to be processed to ensure appropriate sign changes, for example by removing the mean value of the input.

\section{Empirical evaluations with more complex inputs}
Our analysis thus far only considered period signals and strong coupling.
Here, we numerically investigate the system dynamics with aperiodic and chaotic inputs, time varying frequencies and reduced $K$.

\subsection{Effect of reduced coupling}
In practical applications, coupling might remain small for numerical
stability reasons. We explore whether exponential convergence remains possible.
We performed simulations of a phase oscillator coupled to a cosine input and evaluated its synchronization region (without frequency adaptation) for a large range of $K$ (between $1$ to $10^3$) and then the region of exponential convergence when frequency adaptation was used.
Our numerical experiments show that these two regions approximately coincide, even for small coupling ($K<10$). This suggest that frequency adaptation enters the exponential regime inside the synchronization region of the normal phase oscillator, even for small $K$.

In \cref{fig:sin1_entrain} we show an example of such convergence where convergence becomes exponential when entering the synchronization region.
\cref{fig:entrain_test} shows a superposition of the regions of exponential convergence
and the synchronization regions for a more complex periodic input. This suggests that our observation extends to more complex synchronization regions structures.
We numerically evaluated the behavior of the adaptive frequency phase oscillator for different types of input, different values of coupling and initial conditions for $\omega$.
For each experiment, we evaluated the synchronization regions of the oscillator for a given input without frequency adaptation together with the convergence behavior of $\omega$ with adaptation. 

\begin{figure}
\centering
\includegraphics[width=.49\textwidth]{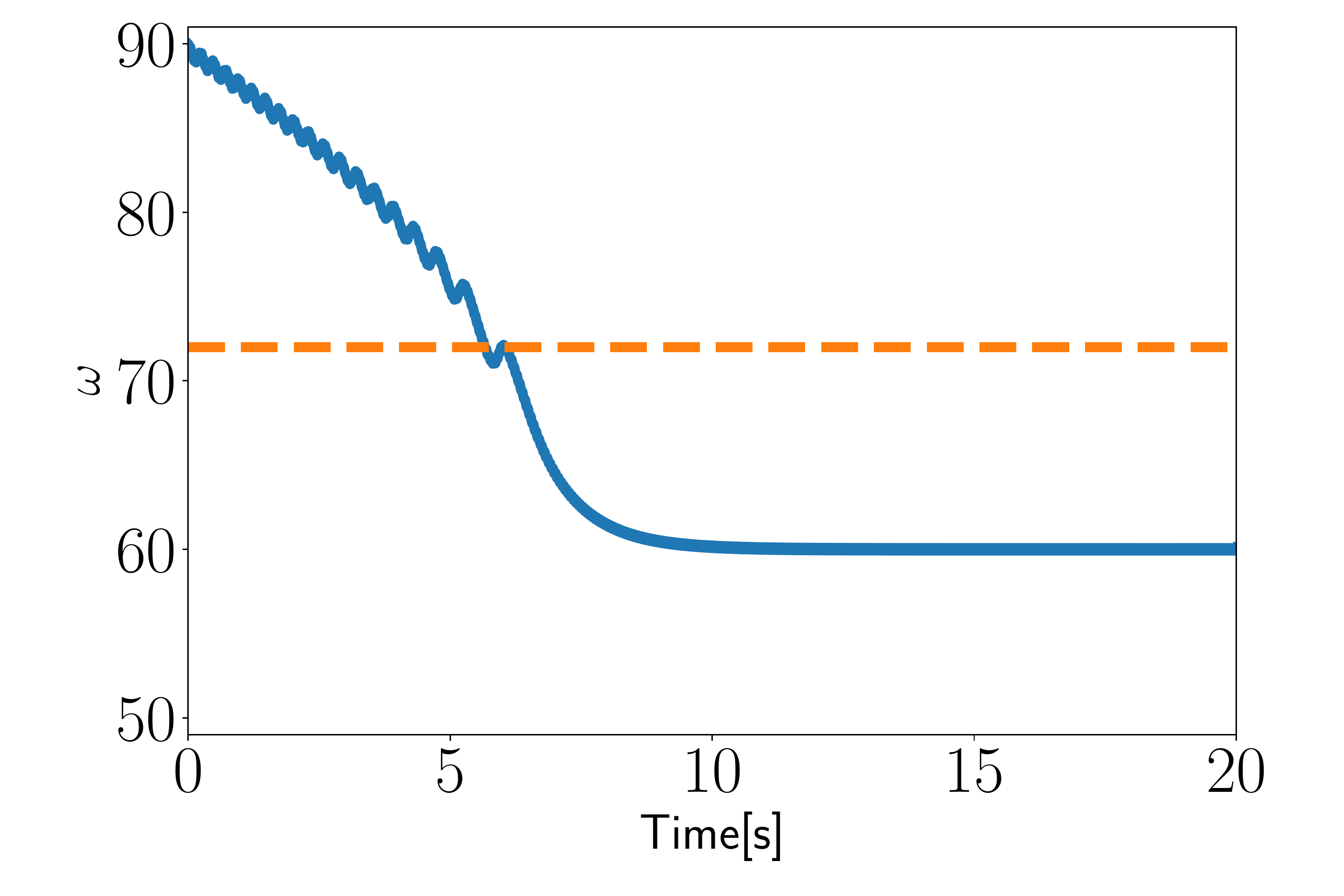}
\caption{Example of convergence of $\omega$ for small coupling ($K=20$). The input signal is $F(t) = \cos(60 t)$, $\omega(0) = 90$. The vertical dashed line shows the limit of the synchronization region, we notice that convergence becomes exponential when the frequency of the oscillator enters in it.
}\label{fig:sin1_entrain}
\end{figure}

\begin{figure}
\centering
  \includegraphics[width=0.45\textwidth]{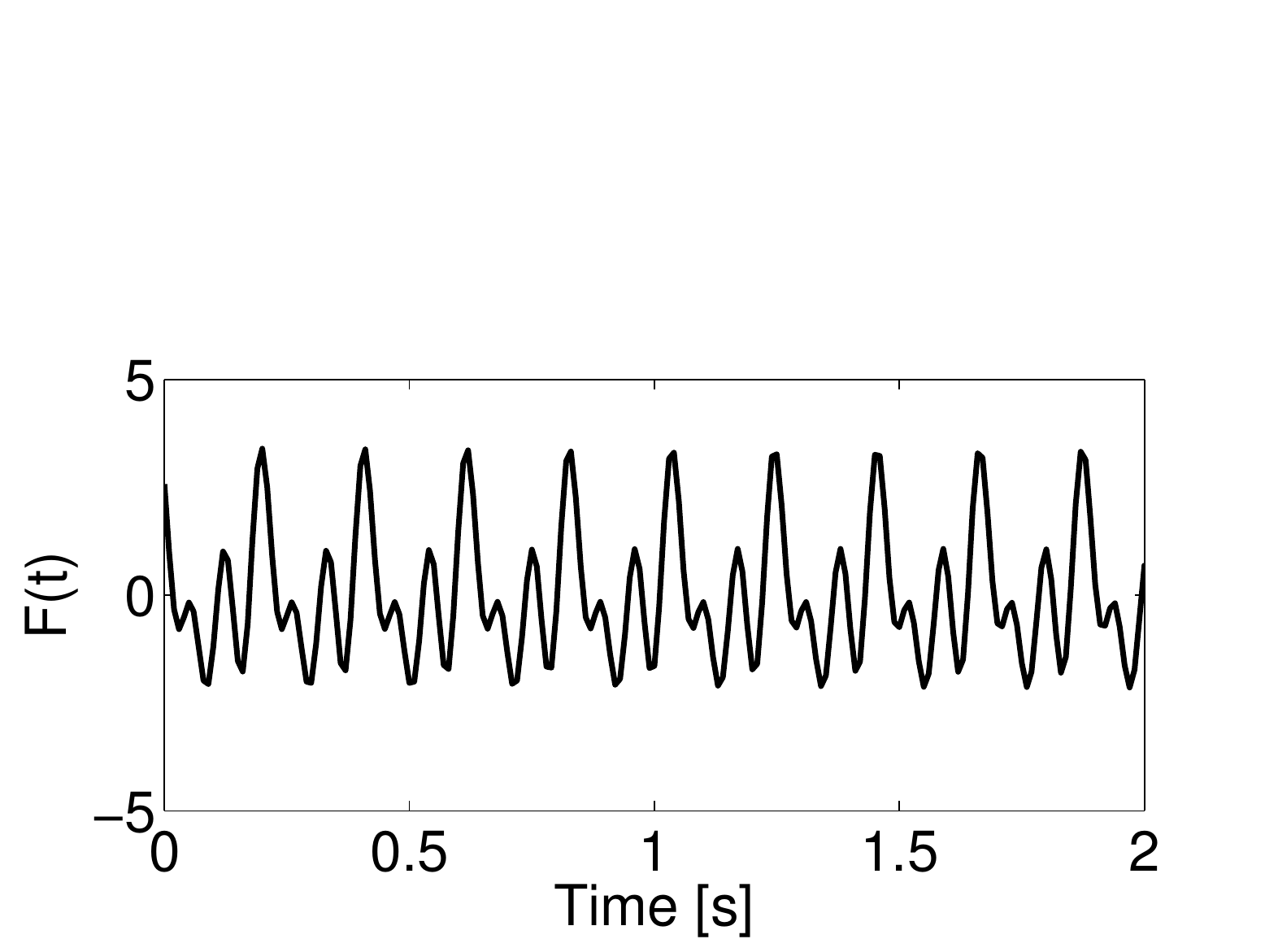}  \includegraphics[width=.46\textwidth]{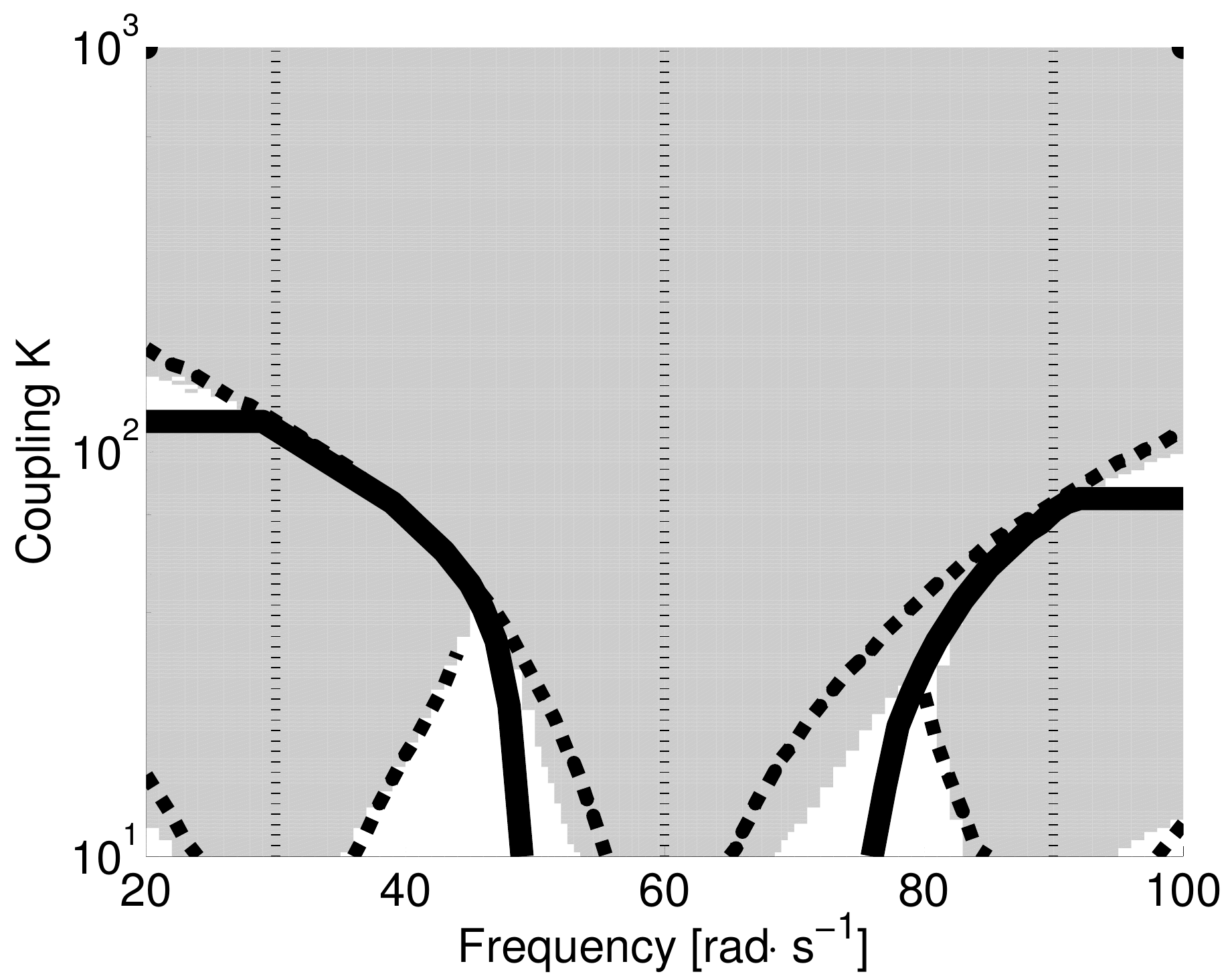}
  \caption{
The left graph show the input signal, $F(t) = 1.3\cos(30t + 0.4) + \cos(60t) + 1.4\cos(90t +1.3)$. The right graph shows: 1) the three synchronization regions of the phase oscillator without frequency adaptation associated to the three frequencies of the input (delimited by the dashed lines) and 2) the regions where where exponential convergence is numerically detected for the adaptive frequency phase oscillator (grey areas). We notice that both synchronization regions and regions of exponential convergence approximately overlap. For moderate coupling $K$, $\omega$ converges to one of the three frequency components of the input depending on its initial condition. The thick black line separate these three regions (e.g. the bottom left region shows the initial values of $\omega$ that converge toward 30, the center region towards 60 and the bottom right region towards 90). The vertical dotted lines represent the frequency components of the input.}\label{fig:entrain_test}
\end{figure}

The regions of exponential convergence match well the synchronization regions.
It must be noted that the numerical delimitation of the region of exponential convergence is not very precise, due to the complex interaction between the oscillator and the several frequency components of the input signal. This has to be taken into account to explain why the regions of exponential convergence slightly exceeds the regions of entrainment.
This observation enables to bridge the results we previously derived for small coupling $K<<1$ \cite{righetti06b} where we showed that convergence depends on the frequency components and their associated amplitude \eqref{eq:small_coupling} and results we derive in this article for strong coupling.
Albeit adaptation of frequency is different from mere synchronization, our numerical results suggest that the structure of the synchronization regions is critical in the convergence of the adapted frequency. 

\subsection{Extracting frequencies from a chaotic signal}
Thus far we only treated cases where the input was periodic.
Here we show the behavior of the adaptive frequency oscillator for inputs that are not periodic but possesses a localized peak in their frequency spectrum. The oscillator can extract this frequency from the signal, a non trivial task.
Further, we show that the maps derived in the previous section accurately predict the behavior of the system.
We consider the Lorentz system in its standard chaotic regime
\begin{align}
    \dot{x} & = 10 (y - x)\\
    \dot{y} &= x ( 28 - z) - y\\
    \dot{z} &= xy - \frac{8}{3}z
\end{align}
and use the state variable $z(t)$ as an input to the adaptive frequency oscillator. We center the variable to ensure that zero crossings happen, i.e. we use $F(t) = z(t) - 23$ as an input to the adaptive frequency oscillator. A Fourier transform of the signal shows a clear peak at frequency $8.42$ $\textrm{rad}\cdot\textrm{s}^{-1}$. Figure \ref{fig:lorentz} shows the result of the numerical simulation. The oscillator adapts $\omega$ to the correct frequency component, i.e. it is capable of extracting the major frequency present in the chaotic signal. 
Further, the computed maps accurately predict the behavior of the system.
Note here that the extraction of this frequency is not trivial as it
would require performing a FFT or a similar operation.

\begin{figure}
    \centering
    \includegraphics[width=0.49\textwidth]{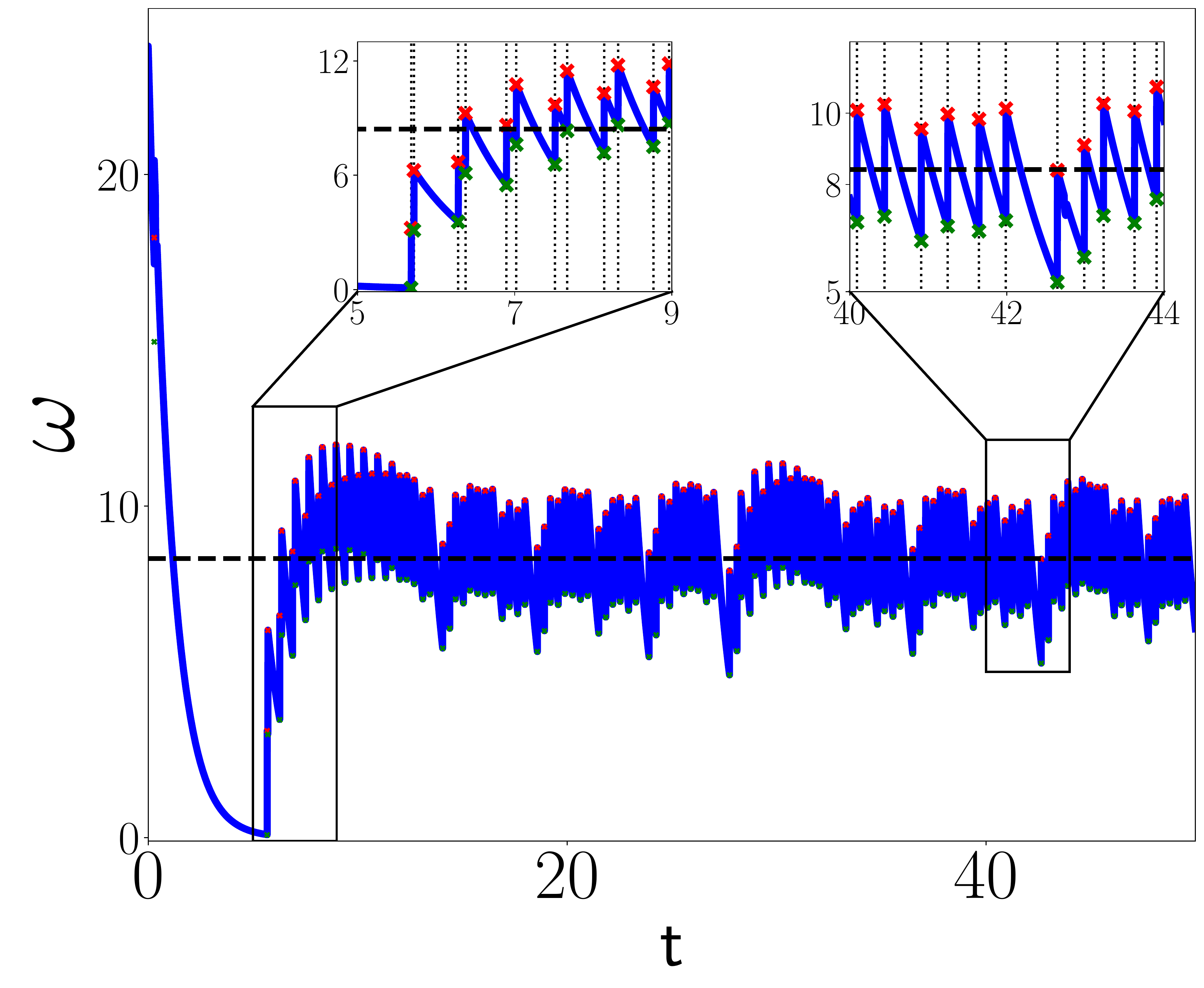}
    \includegraphics[width=0.49\textwidth]{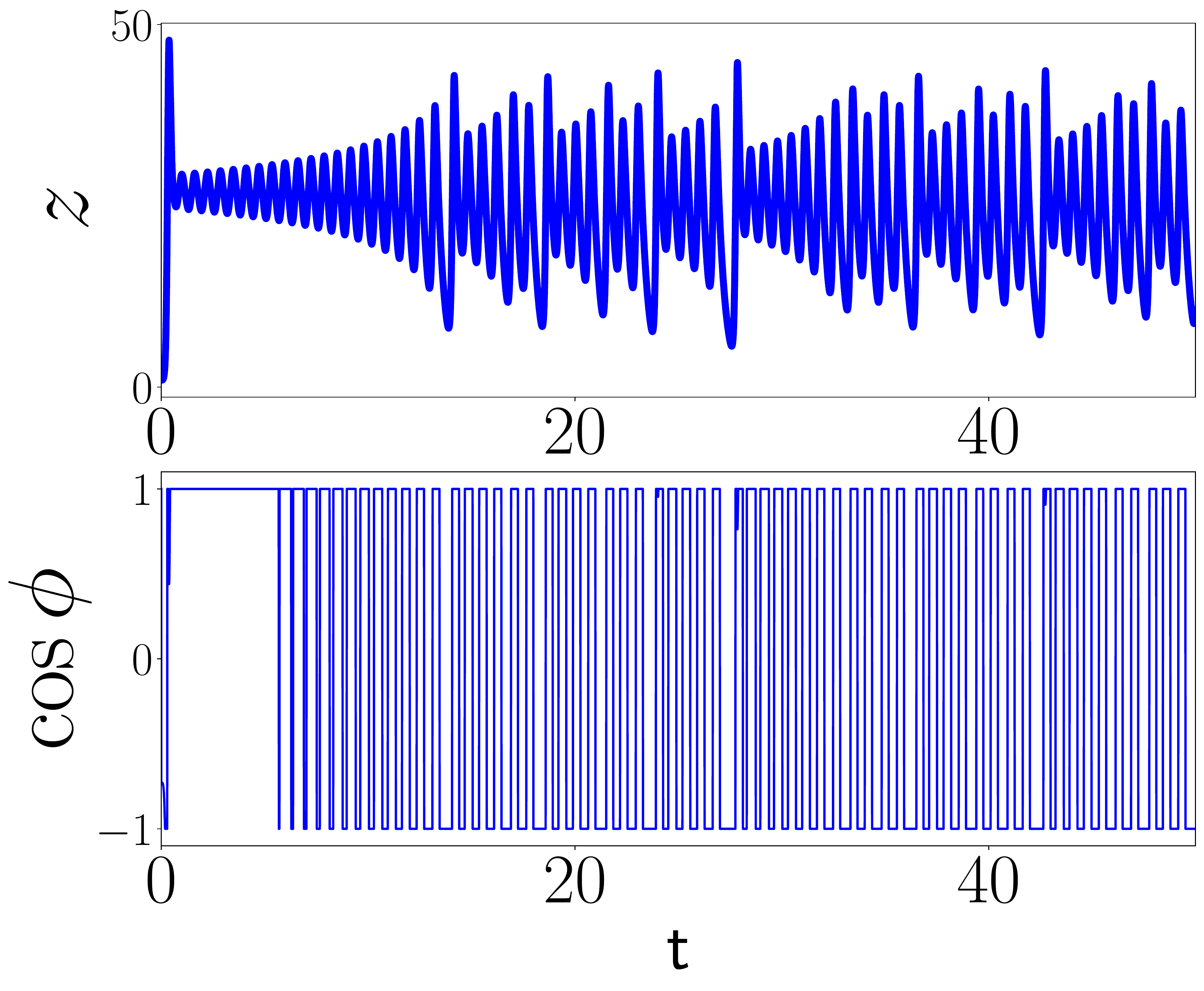}
    \caption{Numerical experiments with Lorentz system. The left graph shows the time evolution of $\omega$ when the input signal is the centered $z$ variable of the Lorentz system ($F(t) = z(t) - 23$), the dashed black line shows the frequency peak of the input. The crosses show the predictions from the $\omega^+$ (red cross) and $\omega^-$ (green cross) maps. In the zoomed plots, the vertical dashed lines correspond to the zero-crossings of the input. The right graph shows $z(t)$ (top) and the "output" of the oscillator $\cos\phi$.}
    \label{fig:lorentz}
\end{figure}

\subsection{Non-periodic inputs with discrete spectra}\label{sec:aperioc}
The behavior of the adaptive frequency oscillator when
the input $F(t)$ has a discrete spectra but is not periodic is well defined
in the small coupling case: the oscillator frequency converges
to one of the frequency component present in the input spectrum \cite{righetti06b}. In fact, we observe similar behavior concerning exponential convergence in synchronization regions than in the periodic case discussed previously.
However, we empirically found that after $K$ reaches a certain value,
the oscillator's frequency does not converge anymore.
However, the discrete maps defined in
 \cref{eq:discrete_maps_ti1,eq:discrete_maps_ti2} are still capable to accurately
predict the system behavior after each slow-fast events. As an example, \cref{fig:conv_predict_aperiodic3} shows the evolution of $\omega$ compared to
the prediction of each slow-fast event for such a case. We can see that the discrete maps are able to very well predict the non-trivial, non-periodic behavior of the system. This empirically supports the validity of our analysis for complex inputs.
Our observation also implies that for non-periodic signals, convergence
depends on coupling strength, which is not the case for periodic signals, where the system always converges to a multiple of the signal frequency.
\begin{figure}
\centering
\includegraphics[width=0.75\textwidth, trim=50 0 50 0, clip]{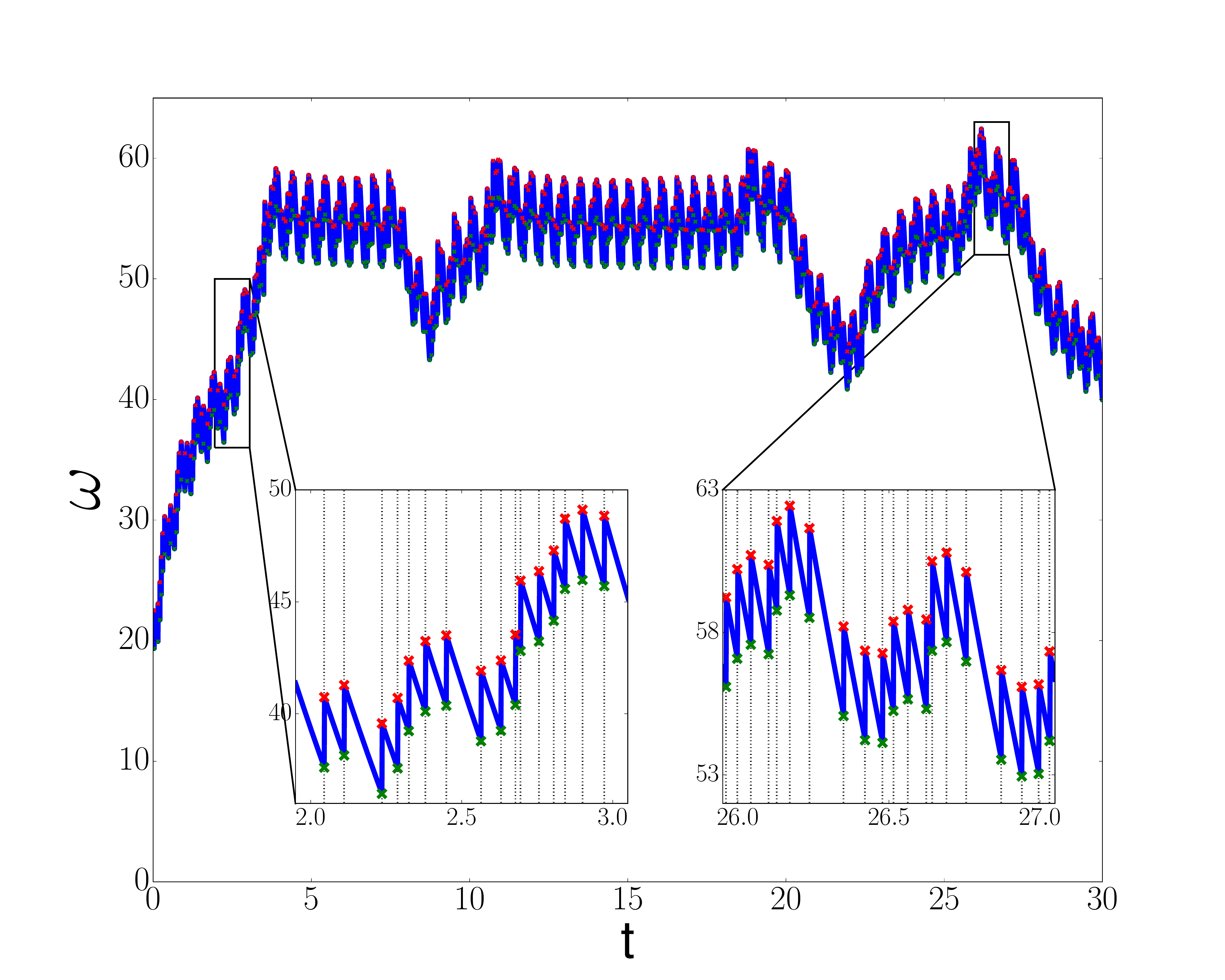}
\caption{Time evolution of the adaptive frequency oscillator for $F(t) =  1.3\cos(30t) + \cos(30\sqrt{2}t) + 1.4\cos(\frac{30\pi}{\sqrt{2}}t)$, where $\lambda =1$, $K=10^7$, $\omega_F=30$. The graph shows the evolution of $\omega(t)$ (blue line), the predictions $\omega_n^+$ (red cross) an $\omega_n^-$ (green cross) taken from \cref{eq:discrete_maps_ti1,eq:discrete_maps_ti2}. In the zoomed plots, the vertical dashed lines correspond to $F(t) = 0$.}\label{fig:conv_predict_aperiodic3}
\end{figure}

\subsection{Tracking changing frequencies}
An adaptive frequency oscillator will also be able to track a time-varying frequency.
We have seen earlier that the frequency adaptation is exponential \eqref{eq:exp_convergence} with convergence rate $\lambda$. 
Therefore, while the dynamics of $\omega$ is nonlinear,
its average behavior resembles that of a first-order linear low pass filter with 
cutoff frequency $\lambda$ $\mathrm{rad}\cdot\mathrm{s}^{-1}$. We assume here that
the low-pass filter acts directly on the frequency of the input signal $F(t)$, which the system does not have explicit access to.
In this case, changing frequencies will be tracked properly only when $\dot{\omega}_F < \lambda$.
To support this claim, we numerically evaluate the response of $\omega(t)$ when the input's frequency changes periodically (i.e. we aim to see how fast can $\omega$ adapt to an input with a time-varying frequency).
We use the input signal $F(t) = \sin(\psi)$, with $\psi = \omega_F t + \frac{1}{\omega_C} \sin(\omega_C t)$
so the instantaneous frequency of the input is $\dot{\psi} = \omega_F + \cos(\omega_C t)$, i.e. the frequency of the input is oscillating around $\omega_F$ at frequency $\omega_C$.
The frequency response analysis aims to study the ratio and phase differences between the time series $\omega(t)$ and $\omega_F + \cos(\omega_C t)$. This enable us to study the first order linear response of the frequency adaptation mechanism as shown in \cref{fig:freqResp}. We notice in the figure all the characteristics
of a first order low pass filter: amplitude reduction of -3dB and $\frac{\pi}{4}$ phase shift at the cutoff frequency, amplitude reduction of 20dB per decade.

\begin{figure}
\includegraphics[width=.49\textwidth, clip=true, trim=0 0 0 0]{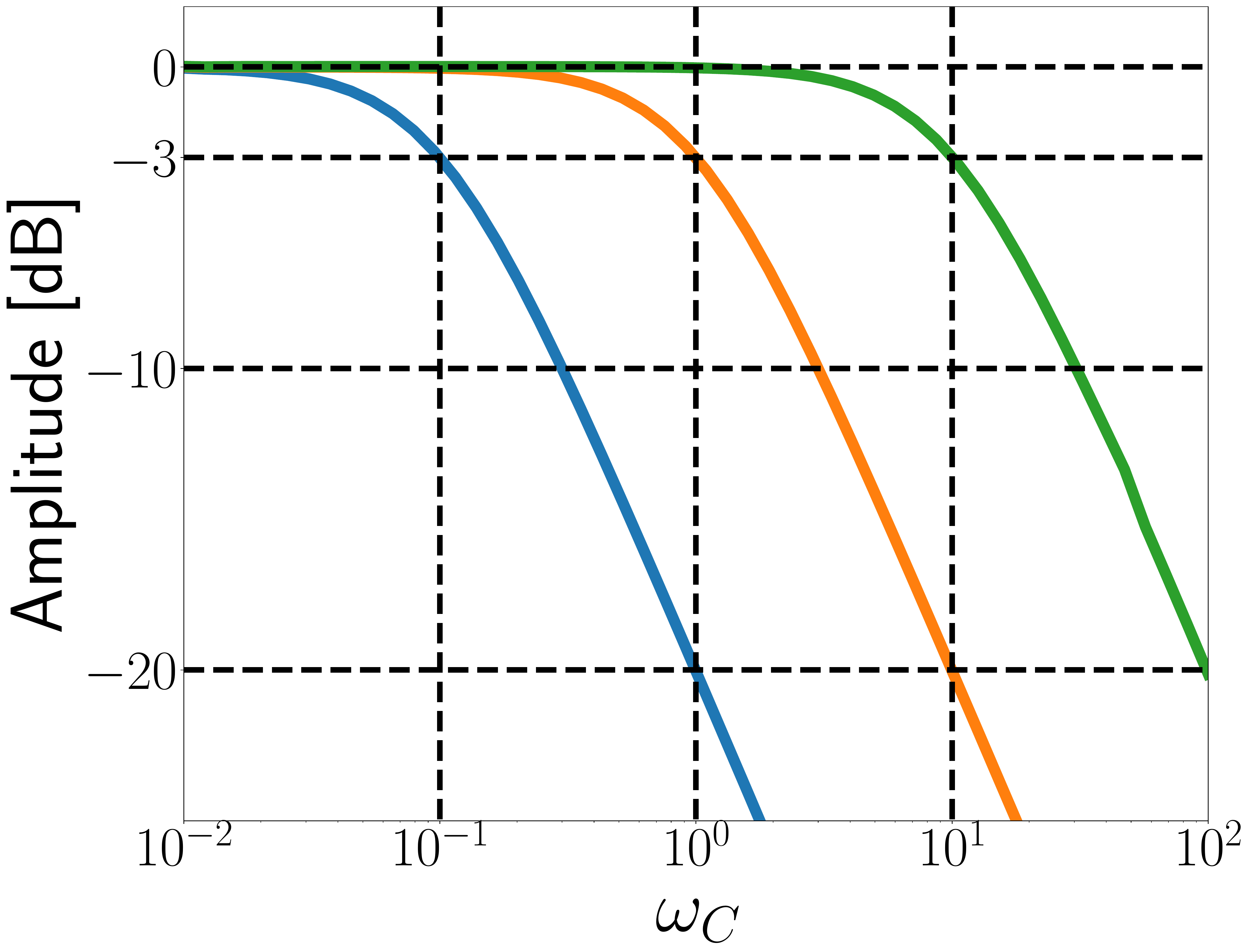}
\includegraphics[width=.49\textwidth, clip=true, trim=0 0 0 0]{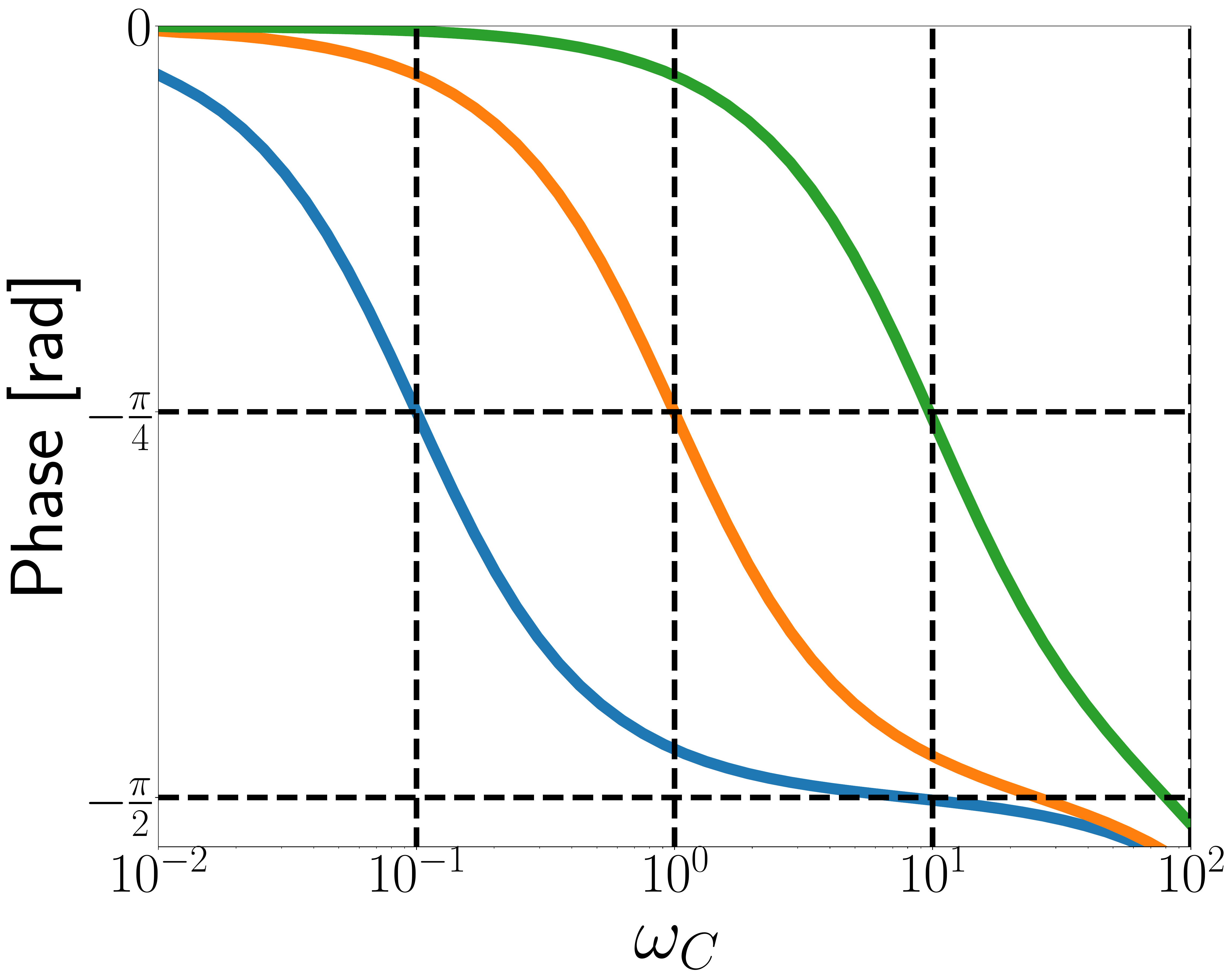}
\caption{Frequency response experiments. We show both the magnitude (left) and the phase delay (right) of the response of $\omega(t)$ for $\lambda = 1$ (orange), $\lambda=10$ (green) and $\lambda=0.1$ (blue) when the adaptive oscillator is subjected to an input with time-varying frequency. The time varying frequency is $\dot{\psi} = \omega_F + \cos(\omega_C t)$. The frequency response is computed using the ratio of the Hilbert transforms of $\dot{\psi}$ and $\lambda\omega(t)$ in steady-state (i.e. after the transient dynamics of $\omega(t)$).
}\label{fig:freqResp}
\end{figure}

These results can also explain the frequency tracking limitations empirically observed in \cite{buchli08} in the case of a pool of oscillators coupled with a negative feedback loop.
It is worth noting that while $\lambda$ can be chosen arbitrarily and will
lead to any desired convergence time to the input frequency, the resulting
oscillation around the desired frequency $\omega_F$ will have amplitude $\Delta\omega = \lambda \pi$. Therefore, the resolution of frequency extraction is limited by the speed at which this frequency is recovered. This is not surprising as it is reminiscent of fundamental time-frequency resolution results in signal processing.




\section{Pool of adaptive frequency oscillators}\label{sec:pool}
In this section, we study the frequency adaptation mechanism
in a more complex system involving several oscillators coupled via negative feedback together with an adaptive amplitude mechanism.
Our previous work numerically investigated how a pool of adaptive frequency oscillators coupled via a negative mean field could do frequency analysis of signals, with discrete, continuous and time-varying spectra \cite{buchli08} but without any amplitude adaptation.
Here, we study the method introduced in \cite{righetti05b} for robot control,
which additionally associates to each oscillator a variable encoding the amplitude of the corresponding oscillation.
We chose this system as this idea has been used in several robotics applications \cite{gams09,Petric:2011dr,righetti06}.
In these applications, a periodic pattern is learned from demonstrations using
the adaptive mechanism. The resulting stable oscillations generated by the system are then 
used as a controller (potentially adding coupling between oscillators after learning to ensure proper phase relations). Feedback can also be included after encoding the pattern to react to unexpected disturbances, for example to adapt walking patterns online \cite{righetti06}, thanks to the encoded stable limit cycle.
To our knowledge the method was never studied rigorously, in the weak or strong coupling regime, despite its use in several applications.

We show below the persistence of locally invariant slow manifolds similar to those identified for a single oscillator (\Cref{sec:2}) when the feedback loop and multiple oscillators are introduced.
We also show that amplitude adaptation is mainly due to the slow dynamics. Convergence is controlled through the disappearance of these slow manifolds as the amplitude gets properly adapted.
Interestingly, in the case of a single oscillator with the feedback loop, a new type of slow manifold appears which alters the frequency adaptation mechanism as the amplitude variable is adapted.
However, hyperbolicity is not preserved when introducing multiple oscillators and it is not clear if slow invariant manifolds of this type still exist. Numerical experiments show that a network of such oscillators can reproduce complex input signals. 
While for medium coupling we observe amplitude and frequency convergence, numerical simulations suggest that with strong coupling, convergence is not guaranteed anymore although the system properly reproduces the input signal. This numerical result suggest
certain limitations when using networks of adaptive frequency oscillators with strong coupling.

\subsection{System description}
The system consists of a set of adaptive frequency oscillators
coupled together via a negative feedback loop. To each oscillator,
we associate a new state variable $\alpha_i$ encoding the oscillator output amplitude. The output of the system is the sum of the outputs of the oscillators. The system is described as
\begin{align}
\dot{\phi}_i &= \lambda\omega_i - K F(t) \sin\phi_i\\
\dot{\omega}_i &= - K F(t) \sin\phi_i\\
\dot{\alpha}_i &= \eta F(t) \cos\phi_i\\
F(t) &= I(t) - \sum_{i=1}^{N}\alpha_i\cos\phi_i
\end{align}
where $\eta>0$ and we assume $\alpha_i=0$ at $t=0$. We call output of the system the sum $\sum_{i=0}^{N}\alpha_i\cos\phi_i$. $I(t)$ is an arbitrary input signal (typically a periodic signal) assumed to be $C^\infty$.
The intuition behind the behavior of the system is as follows: each oscillator will
adapt its frequency to one frequency component of the input, and then adapt its amplitude $\alpha_i$ until this frequency component disappears from the error signal $F(t)$. The remaining oscillators can then adapt their frequency to the remaining frequency components until $F(t)=0$ and the system is able to reproduce the input completely.

\begin{remark}
If the input is periodic $I(t) = \sum_{i=1}^N A_i \cos(\Gamma_i t + \gamma_i)$ (where $A_i$, $\Gamma_i$ and $\gamma_i$ are real constants), for a system with at least $N$ oscillators, setting $\lambda \omega_i = \Gamma_i$, $\alpha_i = A_i$, $\phi_i = \Gamma_i t + \gamma_i$ for the first N oscillators and $\alpha_i = 0$ for the remaining ones is a solution of the system. The system's output perfectly reconstructs the periodic input signal and $F(t) = 0$ for all $t$.
\end{remark}

\subsection{Singular orbits for a single oscillator}
First we study the singular orbits of the slow-fast system with a single oscillator to understand the effect of feedback and amplitude adaptation.
Again, we use the change of variable $\Omega_i = \phi_i -\omega_i$  and set $\epsilon = \frac{1}{K}$ to study the singularly perturbed system
\begin{align}
\epsilon\dot{\omega} &= - \sin(\Omega + \omega) \left( I(\theta) - \alpha\cos(\Omega + \omega) \right)\label{eq:pool_one_oscill1}\\
\dot{\Omega} &= \lambda\omega\label{eq:pool_one_oscill2}\\
\dot{\alpha} &= \eta \cos(\Omega + \omega) \left( I(\theta) - \alpha\cos(\Omega + \omega) \right)\label{eq:pool_one_oscill3}\\
\dot{\theta} &= 1\label{eq:pool_one_oscill4}
\end{align}

\begin{theorem}
For $\epsilon$ sufficiently small, there exist infinitely many slow invariant manifolds for the flow of \cref{eq:pool_one_oscill1,eq:pool_one_oscill2,eq:pool_one_oscill3,eq:pool_one_oscill4}. They consist of simply connected, compact subsets of $\mathbb{R}^4$ with one of the following three forms
\begin{equation}\begin{array}{lll}
\Mminus:\ \ & \omega = (k\pi - \Omega)(1 + \frac{\epsilon\lambda}{(-1)^k I(\theta) - \alpha}) + O(\epsilon^2),&\quad \alpha < (-1)^k I(\theta),\ k\in\mathbb{Z} \nonumber\\
\Mplus:\ \ & \omega = (k\pi - \Omega)(1 + \frac{\epsilon\lambda}{(-1)^k I(\theta) - \alpha}) + O(\epsilon^2),&\quad \alpha > (-1)^k I(\theta),\ k\in\mathbb{Z} \nonumber\\
\MF:\ \ & \alpha \cos(\Omega + \omega) = I(\theta) + O(\epsilon),&\quad \sin(\Omega + \omega) \neq 0,\ \alpha \neq 0 \nonumber
\end{array}\end{equation}
Manifolds of the type $\Mminus$ are locally attracting and ones of the type $\Mplus$ are locally repelling and the ones of the form $\MF$ are locally attracting if $\alpha > 0$ and repelling otherwise.
\end{theorem}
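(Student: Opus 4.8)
The plan is to mirror the proof of \Cref{th:slow_manifolds}, replacing the scalar gain $F(\theta)$ by the error term $I(\theta)-\alpha\cos(\Omega+\omega)$ and carrying the extra slow variable $\alpha$ along for the ride. Rescaling time by $t=\epsilon\tau$ turns \cref{eq:pool_one_oscill1,eq:pool_one_oscill2,eq:pool_one_oscill3,eq:pool_one_oscill4} into a fast system in which $\Omega$, $\alpha$ and $\theta$ are slow and $\omega$ is the only fast variable. At $\epsilon=0$ the critical set is the zero locus of $g(\omega,\Omega,\alpha,\theta):=-\sin(\Omega+\omega)\bigl(I(\theta)-\alpha\cos(\Omega+\omega)\bigr)$, which factors into the two families $\Omega+\omega=k\pi$, $k\in\mathbb{Z}$, and $\alpha\cos(\Omega+\omega)=I(\theta)$.

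First I would check normal hyperbolicity family by family. The $4\times4$ Jacobian of the fast field at $\epsilon=0$ has only its first row nonzero, so three eigenvalues vanish (matching the three slow directions) and the fourth is $\partial_\omega g$. Writing $u=\Omega+\omega$ one gets $\partial_\omega g = -\cos u\,(I-\alpha\cos u)-\alpha\sin^2 u$. On $u=k\pi$ this collapses to $\alpha-(-1)^k I(\theta)$, nonzero exactly when $\alpha\neq(-1)^k I(\theta)$; it is negative (attracting, giving $\Mminus$) when $\alpha<(-1)^k I(\theta)$ and positive (repelling, giving $\Mplus$) otherwise. On $\alpha\cos u=I(\theta)$ the first term drops and $\partial_\omega g=-\alpha\sin^2 u$, nonzero provided $\alpha\neq 0$ and $\sin u\neq 0$, attracting for $\alpha>0$ and repelling for $\alpha<0$, giving $\MF$. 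Restricting each family to a compact, simply connected piece of its defining surface that avoids the degeneracy loci — $\alpha=(-1)^k I(\theta)$ for the first family, and $\alpha=0$ or $\sin u=0$ for the second, which are precisely the places where the two families meet — produces compact normally hyperbolic critical manifolds $M_0$, and Fenichel's theorem \cite{fenischel79,jones1609geometric} then furnishes, for $\epsilon$ small, locally invariant manifolds that are $O(\epsilon)$-close, diffeomorphic, and of the same stability type.

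Next I would pin down the $O(\epsilon)$ form in the statement. On $\Mminus$ and $\Mplus$, writing $\omega=(k\pi-\Omega)+\epsilon\,\omega_1(\Omega,\alpha,\theta)+O(\epsilon^2)$ and substituting into the invariance identity $\epsilon\bigl(\lambda\omega\,\omega_\Omega+\eta\cos u\,(I-\alpha\cos u)\,\omega_\alpha+\omega_\theta\bigr)=g$, one uses that the leading term has $\omega_\Omega=-1$, $\omega_\alpha=\omega_\theta=0$, that $g$ vanishes at zeroth order, and that $\partial_\omega g|_0=\alpha-(-1)^k I(\theta)$; matching the $\epsilon^1$ coefficients gives $\omega_1=-\lambda(k\pi-\Omega)/\bigl(\alpha-(-1)^k I(\theta)\bigr)$, i.e. exactly the factor $(k\pi-\Omega)\,\epsilon\lambda/\bigl((-1)^k I(\theta)-\alpha\bigr)$ claimed. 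On $\MF$ the implicit function theorem applied to $\alpha\cos(\Omega+\omega)=I(\theta)$ (legitimate since $\partial_\omega[\alpha\cos u]=-\alpha\sin u\neq 0$ there) already yields the leading-order manifold, and Fenichel supplies only the $O(\epsilon)$ error, so no further order-matching is needed to obtain the stated form.

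The main obstacle I anticipate is the bookkeeping around the degeneracy loci: the two families of critical manifolds intersect exactly where normal hyperbolicity fails ($\sin u=0$ together with $\alpha\cos u=I(\theta)$, which forces $\alpha=\pm I(\theta)$, and $\alpha=0$), so some care is required both in stating the admissible compact pieces of each manifold and in verifying that the sign dichotomies $\alpha\lessgtr(-1)^k I(\theta)$ and $\alpha\lessgtr 0$ persist under the $O(\epsilon)$ perturbation. Beyond that, the Jacobian computation and the perturbation expansion are a direct generalization of the argument already used for \Cref{th:slow_manifolds}.
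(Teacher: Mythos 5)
Your proposal is correct and follows essentially the same route as the paper: identify the two families of critical manifolds from the factorization of the fast nullcline, verify normal hyperbolicity via the single nonzero eigenvalue $\partial_\omega g$ (which is $\alpha-(-1)^kI(\theta)$ on $\Omega+\omega=k\pi$ and $-\alpha\sin^2(\Omega+\omega)$ on the feedback manifold), invoke Fenichel, and obtain the $O(\epsilon)$ correction by matching orders in the invariance equation. You in fact supply the details for $\Mminus$ and $\Mplus$ that the paper omits by reference to Theorem~\ref{th:slow_manifolds}, and your attention to the degeneracy loci where the two families intersect is a welcome extra precaution rather than a deviation.
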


\begin{proof}
The proof for $\Mminus$ and $\Mplus$ follows the same reasoning than for the proof
of Theorem \ref{th:slow_manifolds} and we omit it for brevity. For $\MF$, we verify that $\alpha \cos(\Omega + \omega) = I(\theta)$ is a solution to \cref{eq:pool_one_oscill1} when $\epsilon=0$ so simply connected compact subsets satisfying this relation are candidate manifolds.
The Jacobian of the fast system on this critical manifold when $\epsilon=0$ has only one non-zero eigenvalue $-\alpha \sin^2(\Omega + \omega)$ with eigenvector $[1,0,0,0]^T$ so
the direction transverse to the critical manifolds has a non zero eigenvalue as long
as $\sin(\Omega + \omega) \neq 0$ and $\alpha \neq 0$ and these critical manifolds are hyperbolic. Fenichel's theorem can then be invoked to prove the existence of locally invariant slow manifolds $O(\epsilon)$ close to the critical ones. These manifolds are attracting when the sign of the non-zero eigenvalue is negative (and repelling otherwise) which is defined by the sign of $\alpha$.
\end{proof}
We note $\MminusZ$, $\MplusZ$ and $\MFZ$ the critical manifolds on which we study the singular flow.

\subsubsection{Critical orbits on $\MminusZ$ and $\MplusZ$}
Interestingly, the system with the feedback loop and amplitude adaptation still has invariant manifolds $\Mminus$ and $\Mplus$ similar to the ones seen in \Cref{sec:2} with similar $O(\epsilon)$ shapes. However, the feedback loop and amplitude adaptation change the conditions for exiting the manifolds. Indeed, the input zero-crossings does not anymore trigger the fast events. Instead, the fast event is triggered when $\alpha + (-1)^{k+1}I(\theta)$ changes sign. This means that as $\alpha$ increases, the duration of the flow on or close to $\MminusZ$ decreases. When $\alpha >(-1)^{k}I(\theta)$ the flow is not anymore in proximity of attracting slow invariant manifolds of type $\Mminus$.
As for the previous case, the critical orbits on $\MplusZ$ and $\MminusZ$ remain the same for $\omega$ and $\Omega$, i.e.
\begin{align}
    \omega &= -\Omega(0) \mathrm{e}^{-\lambda t} + O(\epsilon)\\
    \Omega &= k\pi +\Omega(0) \mathrm{e}^{-\lambda t} + O(\epsilon)
\end{align}
Furthermore, on these critical manifolds (i.e. when $\Omega + \omega = k\pi$) we have
\begin{align}
    \dot{\alpha}     &= \eta \left( (-1)^{k} I(\theta) - \alpha \right)
\end{align}
and so $\dot{\alpha}>0$ on $\MminusZ$ and $\dot{\alpha}<0$ on $\MplusZ$.
On the attracting manifold, $\alpha$ increases but cannot go beyond the magnitude of the input $I(t)$ to remain on the manifold. This analysis shows that the critical orbit is such that
the amplitude adaptation increases at most to the maximum amplitude of $I(\theta)$. One can easily see that if the input is a simple cosine of amplitude $A$, then $\alpha$ will converge to $A$.
Since both frequency and amplitude adaptation are happening concurrently, it might be desirable to choose $\eta$ small enough with respect to $\lambda$ to ensure frequency convergence prior to amplitude convergence.

\subsubsection{Critical orbits on $\MFZ$}
Interestingly, there are slow locally invariant manifolds $\MF$ appearing when $\alpha \neq 0$. They are due to the feedback loop. On the critical manifolds $\MFZ$, the input $I(t)$ and the output $\alpha \cos(\Omega + \omega)$ are equal.
Writing $\omega = \pm\arccos(\frac{I(\theta)}{\alpha}) - \Omega + n2\pi$
with $n\in\mathbb{Z}$, the critical dynamics is of the form
\begin{align}
    \dot{\Omega} &= \lambda \left( \left(\pm\arccos(\frac{I(\theta)}{\alpha}) - n2\pi\right) - \Omega \right)\\
    \dot{\alpha} &= 0
\end{align}
We see that the amplitude $\alpha$ remains constant on $M_F$ and $\Omega$ will behave like a first-order low-pass filter on a signal of the form $\pm\arccos(\frac{I(\theta)}{\alpha}) - n2\pi$ with cutoff frequency $\lambda$.
Therefore $\Omega$ will follow this signal, i.e. it will increase or decrease until $\sin(\Omega+\omega)$ changes sign and the flow leaves
the manifold. This also implies that $\omega$ will tend to remain constant on $M_F$ if $\lambda$ when large enough compared to the rate of change of the input $I(\theta)$ (to ensure full magnitude of the frequency response).

Manifolds of type $\MFZ$ are bounded within the zeros of $\sin(\Omega+\omega)$ and the flow necessarily leaves them as $\Omega$ increases or decreases. As a simple example, consider $I(t) = A \cos(\omega_F t)$ and $\alpha = A$, then we have
$\dot{\Omega} = \lambda \left( \omega_F t - n2\pi - \Omega \right)$, and $\Omega$ follows the phase of the cosine input, which implies that $\omega$ remain near constant (for $\lambda$ large enough to track $\omega_F t$) since $\cos(\Omega + \omega) = \cos(\Omega_F t)$ on $\MFZ$. The dynamics exits $M_F$ as $\sin(\Omega + \omega)$ changes sign.

In summary, on the critical manifold $\MFZ$ we expect to qualitatively see no amplitude adaptation, an increase or decrease of $\Omega$ and close to no change in $\omega$.

\subsubsection{Critical fast orbits}
We now consider the critical fast dynamics
\begin{align}
    \dot{\omega} = -\sin(\Omega + \omega) \left(I(\theta) - \alpha \cos(\Omega+\omega) \right)
\end{align}
The fixed points of the dynamics correspond to points on the various critical manifolds and their stability properties correspond to the attracting/repelling nature of these manifolds.
The orbits of the fast critical dynamics consists of rapid transitions between neighborhoods of slow manifolds. When $\alpha=0$, we expect to see transitions between manifolds of type $\MminusZ$, similar to the dynamics studied in \Cref{sec:2}.
However, as $\alpha$ increases, we expect intermediary transitions to $\MFZ$, i.e. fast transitions will go from one $\MminusZ$ to $\MF$ and finally to another $\MminusZ$.
In that case, contrary to the system studied in \Cref{sec:2}, one fast event does not lead to an increase of $\pi$ for $\omega$ because the flow transits through a manifold of type $M_F$ "on the way" to the next $\MminusZ$.
\begin{figure}
    \centering
    \includegraphics[width=0.49\textwidth]{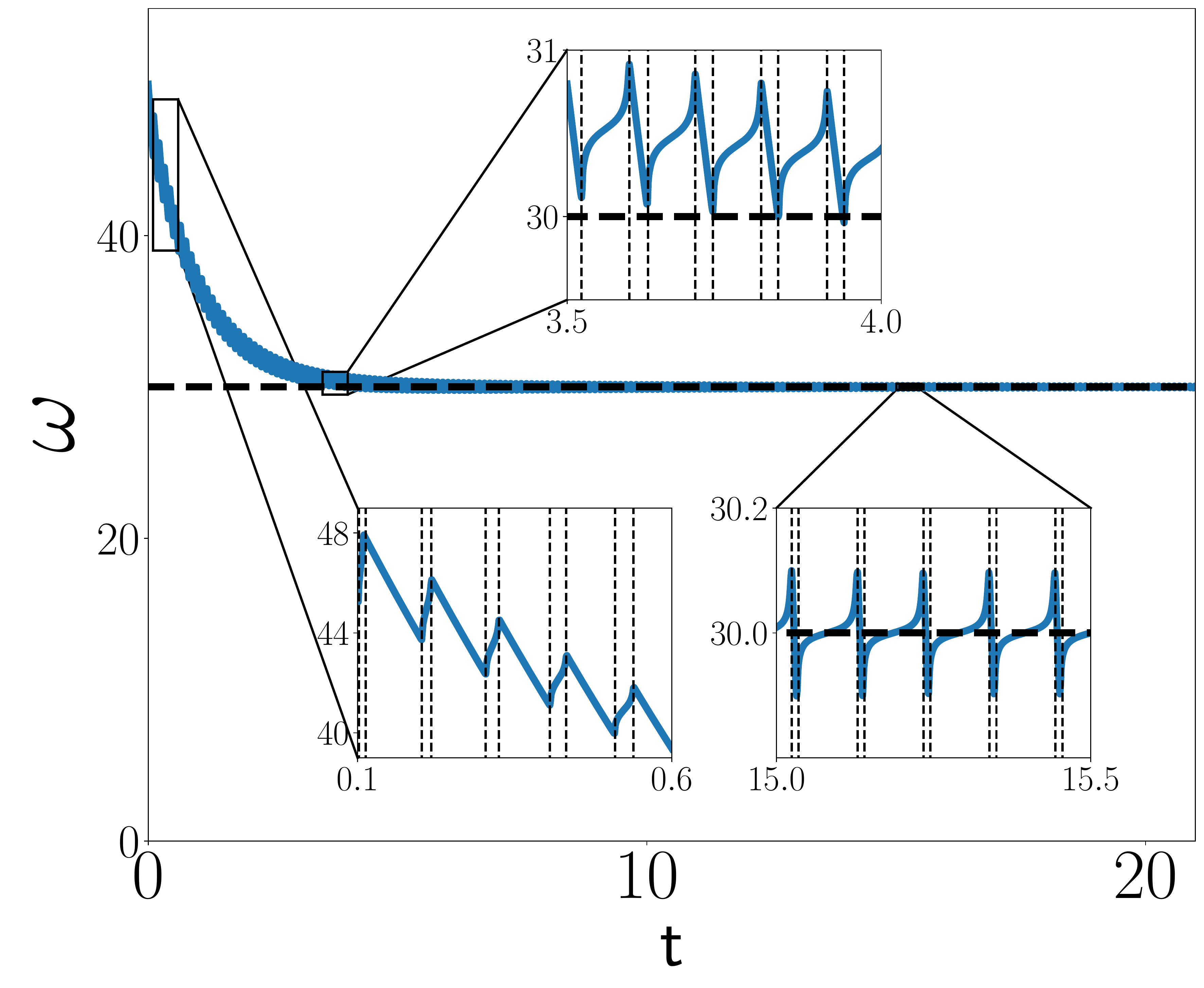}
    \includegraphics[width=0.49\textwidth]{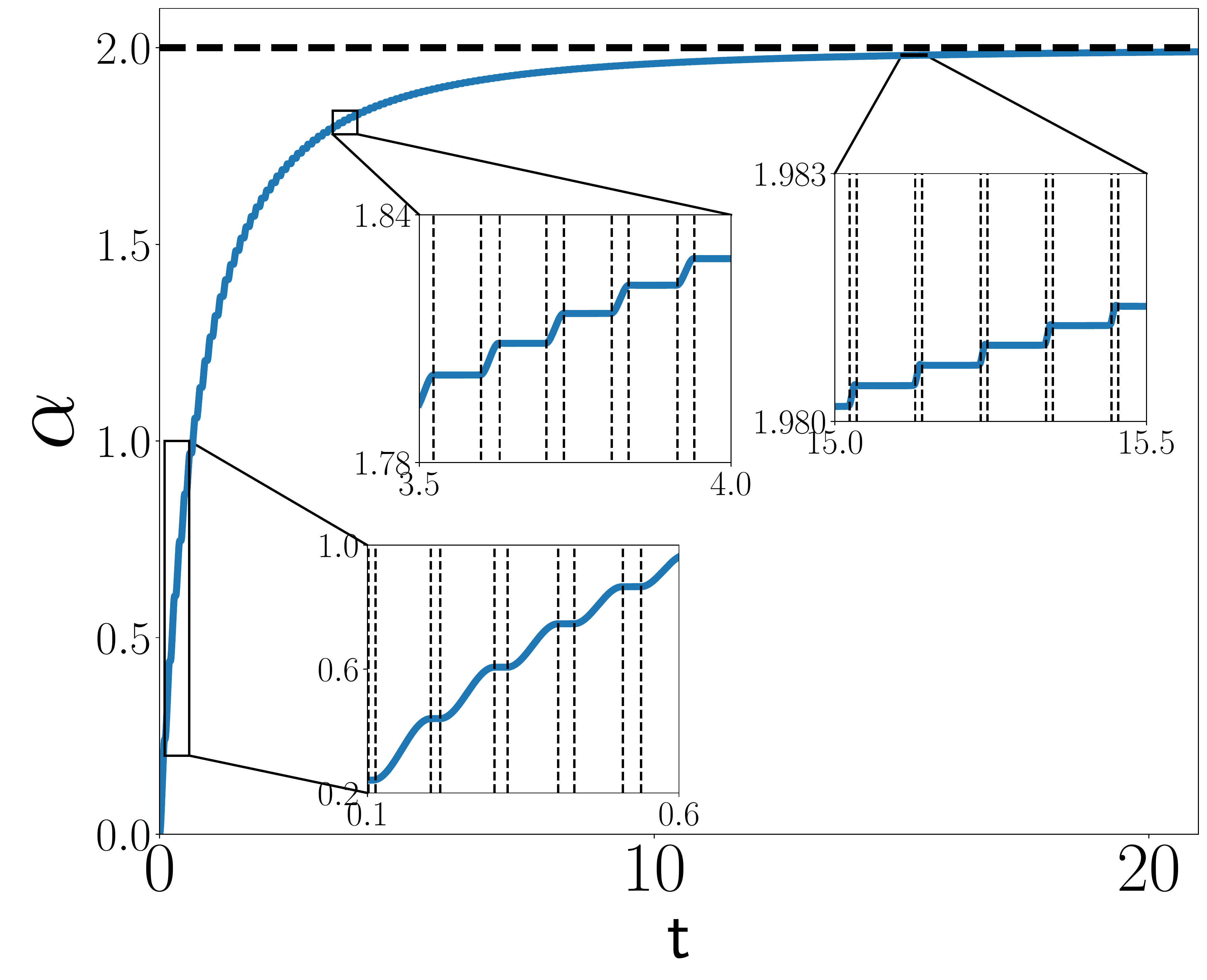}
    \includegraphics[width=0.49\textwidth]{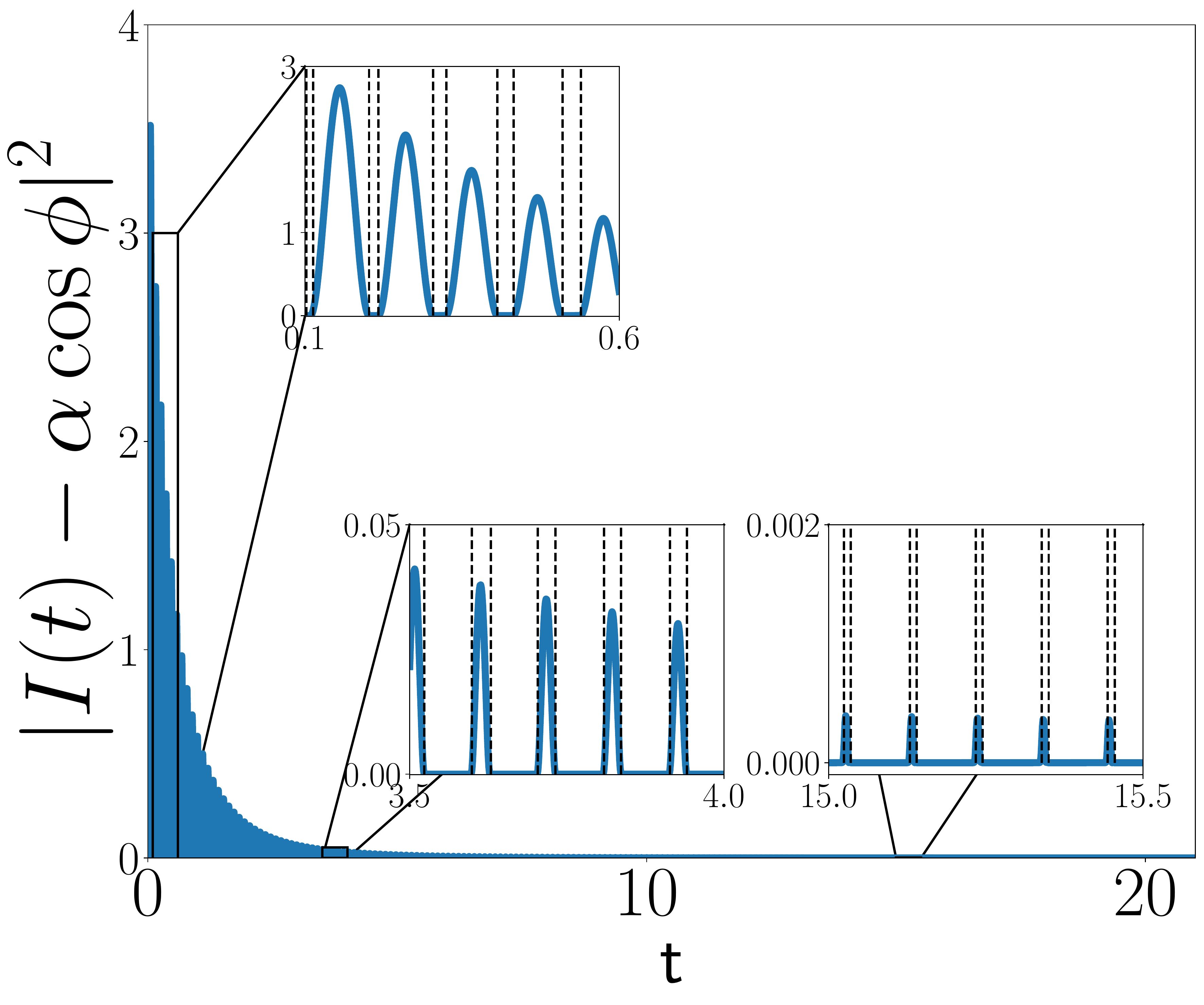}
    \caption{Example of frequency and amplitude adaptation for one oscillator ($K=10^5$, $\lambda=1$ and $\eta=2.$) and $I(t) = 2\cos(30 t)$. $\omega$ and $\alpha$ converge to $30$ and $2$ as expected. The vertical dashed lines enclose the regions where $|I(t) - \alpha\cos\pi|^2 < 10^{-5}$, (i.e. numerically close to 0).
    The qualitative behavior described in \ref{sec:qual_behav} is shown in the three quadrants of each plot: 1) at the beginning where the evolution of $\omega$ resembles that of the original adaptive frequency oscillator (\Cref{sec:2}), 2) in the middle of the convergence where increases of $\omega$ are reduced due to the presence of $\MF$ and 3) towards convergence where $\omega$ varies very little. Similarly we notice that $\alpha$ increases only when $\omega$ decreases (i.e. when the flow is close or on $\Mminus$).}
    \label{fig:pool_one_oscill}
\end{figure}
\subsubsection{Qualitative example of a critical behavior}\label{sec:qual_behav}
Taking all the critical orbits together we can describe the qualitative behavior of a complete critical orbit, for example when $I(t) = A \cos(\omega_F t)$, assuming that $\alpha(t=0) = 0$. At the beginning we expect transitions between stable manifolds of type $M_\pi^-$ with an increase of $\omega$ by $\pi$ during the fast event similar to the case studied in the previous sections. On $M_\pi^-$, the amplitude $\alpha$ will increase. As $\alpha$ increases, we will observe shorter slow events on $M_\pi^-$ and fast events where $\omega$ increases by a smaller amount than $\pi$ as the orbit reaches $M_F$. There, $\Omega$ will increase but $\omega$  and $\alpha$ will remain constant. Upon exiting $M_F$, $\omega$ will increase rapidly again until reaching  another $M_\pi^-$. Over a succession of two fast transitions and one slow event on $\MFZ$, i.e. $\MminusZ  \rightarrow \MFZ \rightarrow \MminusZ$, $\omega + \Omega$ will increase by $\pi$ but $\omega$ will have changed by less than $\pi$. Increasingly, most of this change will be attributed to $\Omega$ until the system converges to $\alpha = A$ and $\omega = \omega_F$. Interestingly, as $\alpha \to A$, the oscillations of $\omega$ will decrease and eventually disappear (i.e. the feedback loop enables perfect adaptation to $\omega_F$). A numerical illustration of this behavior is shown in Figure \ref{fig:pool_one_oscill} where we see all the features of the flow described in this section.

\subsection{Singular orbits for a pool of N oscillators}
We now extend our analysis to the case where there are $N$ oscillators coupled through the negative feedback loop.
The dynamics of each oscillator is
\begin{align}
\epsilon\dot{\omega}_i &= - \sin(\Omega_i + \omega_i) \left( I(\theta) - \sum_{j=1}^{N}\alpha_j\cos(\Omega_j + \omega_j) \right)\label{eq:pool_oscill1}\\
\dot{\Omega}_i &= \lambda\omega_i\label{eq:pool_oscill2}\\
\dot{\alpha}_i &= \eta \cos(\Omega_i + \omega_i) \left( I(\theta) - \sum_{j=1}^N\alpha_j\cos(\Omega_j + \omega_j) \right)\label{eq:pool_oscill3}\\
\dot{\theta} &= 1\label{eq:pool_oscill4}
\end{align}
\subsubsection{Existence of hyperbolic invariant slow manifolds}
The following theorem characterizes the slow invariant (hyperbolic) manifolds of the dynamics.
\begin{theorem}
For $\epsilon$ sufficiently small, there exist infinitely many slow invariant manifolds for the flow of \cref{eq:pool_oscill1,eq:pool_oscill2,eq:pool_oscill3,eq:pool_oscill4}. They consist of simply connected, compact subsets of $\mathbb{R}^{3N+1}$ 
of the following form
\begin{equation}\begin{array}{lc}
M_\pi:\ \  & \omega_i = (k_i\pi - \Omega_i)\left(1 + \frac{\epsilon\lambda (-1)^{k_i}}{I(\theta) - \sum_{j=1}^N(-1)^{k_j}\alpha_j)}\right) + O(\epsilon^2),\\ & \forall i \in \{1,\cdots,N\},\ k_i\in\mathbb{Z},\ I(\theta) \neq \sum_{j=1}^N (-1)^{k_j}\alpha_j \nonumber
\end{array}\end{equation}
Such invariant manifolds are attracting if $\alpha_i < (-1)^{k_i} I(\theta) - \sum_{i\neq j} \alpha_j (-1)^{k_j + k_i}$, $\forall i$, repelling if $\alpha_i > (-1)^{k_i} I(\theta) - \sum_{i\neq j} \alpha_j (-1)^{k_j + k_i}$, $\forall i$ and of saddle type otherwise.
\end{theorem}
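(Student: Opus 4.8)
The plan is to mirror the proof of Theorem~\ref{th:slow_manifolds}, now in the phase space $\mathbb{R}^{3N+1}$ with fast variables $\omega_1,\dots,\omega_N$ and slow variables $(\Omega_1,\dots,\Omega_N,\alpha_1,\dots,\alpha_N,\theta)$. First I would set $\epsilon=0$ in the fast form of \cref{eq:pool_oscill1,eq:pool_oscill2,eq:pool_oscill3,eq:pool_oscill4} to obtain the layer problem $\omega_i' = -\sin(\Omega_i+\omega_i)\,G$, where $G := I(\theta)-\sum_{j=1}^N\alpha_j\cos(\Omega_j+\omega_j)$ and the slow variables are frozen parameters. The family of fixed points relevant to this theorem is $\Omega_i+\omega_i=k_i\pi$ for all $i$, with $k_i\in\mathbb{Z}$; on it $G$ reduces to the quantity $G_k := I(\theta)-\sum_{j=1}^N(-1)^{k_j}\alpha_j$, which is constant in the fast variables. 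This set is the graph $\omega_i=k_i\pi-\Omega_i$ over $(\Omega,\alpha,\theta)$, of dimension $2N+1$ and codimension $N$; restricting to the open region $G_k\neq 0$ and taking simply connected compact subsets supplies the compact critical manifolds required to apply Fenichel's theorem.

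The central step is the normal-hyperbolicity computation. The Jacobian of the layer vector field with respect to the fast variables is $\partial_{\omega_\ell}\bigl(-\sin(\Omega_i+\omega_i)G\bigr) = -\delta_{i\ell}\cos(\Omega_i+\omega_i)G - \sin(\Omega_i+\omega_i)\,\alpha_\ell\sin(\Omega_\ell+\omega_\ell)$. The key point — exactly where the coupling through $G$ might have caused difficulty, but does not — is that on the critical manifold $\sin(\Omega_i+\omega_i)=0$ for every $i$, so all terms containing $\partial_{\omega_\ell}G$ drop out and the Jacobian is the diagonal matrix $\diag\bigl(-(-1)^{k_1}G_k,\dots,-(-1)^{k_N}G_k\bigr)$. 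Its eigenvalues are $\mu_i = (-1)^{k_i+1}G_k$, all nonzero precisely when $G_k\neq 0$; together with the $2N+1$ zero eigenvalues along the directions tangent to the manifold, this establishes normal hyperbolicity with the correct splitting of dimensions. The stability type then follows by reading off signs: the manifold is attracting iff $\mu_i<0$ for all $i$, i.e. $(-1)^{k_i}G_k>0$ for all $i$; expanding $G_k$ and separating the $j=i$ term (which equals $\alpha_i$) turns this into $\alpha_i<(-1)^{k_i}I(\theta)-\sum_{j\neq i}(-1)^{k_j+k_i}\alpha_j$. It is repelling iff all $\mu_i>0$ (the reversed inequalities) and of saddle type whenever the $\mu_i$ have mixed signs, which genuinely occurs here — unlike the scalar case — when the $k_i$ do not all share the same parity relative to $\operatorname{sign}(G_k)$.

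Given normal hyperbolicity and the fact that the vector field is $C^\infty$ (since $I$ is $C^\infty$), Fenichel's theorem \cite{fenischel79,jones1609geometric} applies to each compact critical manifold of any hyperbolic type and produces, for $\epsilon$ small enough, a locally invariant slow manifold $M_\pi$ within $O(\epsilon)$ of the critical one, diffeomorphic to it, with the same stable/unstable splitting and hence the same attracting/repelling/saddle classification. To obtain the $O(\epsilon)$ expression I would write $M_\pi$ as a graph $\omega_i = k_i\pi-\Omega_i+\epsilon\,\omega_i^{(1)}(\Omega,\alpha,\theta)+O(\epsilon^2)$, substitute it into the invariance relation $\epsilon\dot\omega_i = -\sin(\Omega_i+\omega_i)G$ with $\dot\omega_i$ expanded by the chain rule along the slow flow, and match powers of $\epsilon$: at leading order $\dot\omega_i = -\dot\Omega_i = -\lambda(k_i\pi-\Omega_i)$, while $\sin(\Omega_i+\omega_i)=(-1)^{k_i}\epsilon\,\omega_i^{(1)}+O(\epsilon^2)$ and $G=G_k+O(\epsilon)$, so the $O(\epsilon)$ balance forces $\omega_i^{(1)}=(-1)^{k_i}\lambda(k_i\pi-\Omega_i)/G_k$, which is precisely the displayed formula.

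I expect the only genuinely delicate points to be (i) carrying out the restriction to simply connected compact pieces of the non-compact critical set $\{G_k\neq 0\}$ cleanly enough that Fenichel's hypotheses are met, and (ii) invoking Fenichel's persistence in the saddle case rather than the purely attracting one; everything else is the component-wise version of the diagonal-Jacobian and perturbation-matching argument already used for Theorem~\ref{th:slow_manifolds}.
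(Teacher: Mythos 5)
Your proposal is correct and follows essentially the same route as the paper: reduce to the layer problem, observe that on $\Omega_i+\omega_i=k_i\pi$ the Jacobian's off-diagonal (coupling) terms vanish because they carry a factor $\sin(\Omega_i+\omega_i)$, read off the $N$ nonzero eigenvalues $(-1)^{k_i+1}\bigl(I(\theta)-\sum_j(-1)^{k_j}\alpha_j\bigr)$ with eigenvectors along the $\omega_i$ directions, invoke Fenichel, and match powers of $\epsilon$ for the graph expansion. The paper's proof is just a terse pointer back to Theorem~\ref{th:slow_manifolds} plus the eigenvalue/eigenvector statement; your write-up supplies the same computation in full detail and correctly identifies the saddle case arising from mixed signs of the $\mu_i$.
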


\begin{proof}
The proof follows the same reasoning than the proof
of Theorem \ref{th:slow_manifolds}. We note here that the Jacobian of the critical fast dynamics (i.e. when $\epsilon=0$) has $N$ non-zero eigenvalues, all of the form $\alpha_i + (-1)^{k_i+1} I(\theta) + \sum_{i\neq j} \alpha_j (-1)^{k_j + k_i}$, their corresponding eigenvector has zero entries everywhere except for the row associated to $\omega_i$. It means that the unstable and stable manifolds associated to the critical manifold will be tangent to these eigendirections and the unstable and stable manifolds of $M_\pi$ will be $O(\epsilon)$ close to them.
\end{proof}

Interestingly, the slow manifolds remain in the case where $N$ oscillators are coupled together. Note however that the manifolds can now be of saddle type, i.e. they can now have stable and unstable transverse directions, leading to more complex dynamics. The associated unstable and stable manifolds are however aligned with the $\omega_i$ directions at $O(\epsilon)$, preserving the dynamics observed for a single oscillator. The flow on the critical manifolds are also similar, where $\Omega_i(t) = k_i\pi + \Omega_i(0) \mathrm{e}^{-\lambda t} + O(\epsilon)$ and  $\omega_i(t) = -\Omega_i(0) \mathrm{e}^{-\lambda t} + O(\epsilon)$. The dynamics
of $\alpha_i$ on one of these critical manifold is
\begin{align}
    \dot{\alpha}_i &= \eta \left(-\alpha_i + (-1)^{k_i} \left(I(\theta) - \sum_{j\neq i} (-1)^{k_j} \alpha_j \right) \right)
\end{align}
$\dot{\alpha}_i$ is positive when $\alpha_i < (-1)^{k_i} I(\theta) - \sum_{i\neq j} \alpha_j (-1)^{k_j + k_i}$, 
i.e. when the corresponding $\omega_i$ direction is attracting,  and negative otherwise. This is consistent with the findings of the single oscillator case. We note however that the amplitude adaptation now takes into account the amplitude contributions associated to the other oscillators (i.e. these contributions are removed from the input signal), potentially creating more complex interactions.
Qualitatively, the critical orbits described in the case of one oscillator with feedback are preserved when several oscillators are introduced.

\subsubsection{Non-hyperbolic candidate slow manifolds}
The most important difference here is that critical slow manifolds of type $I(\theta) = \sum_{j=1}^N \alpha_j \cos(\Omega_j + \omega_j)$ seen
in the one oscillator case are not normally hyperbolic.
Indeed, the eigenvalues of the critical fast dynamics Jacobian are all 0 except one which is equal to $-\sum_{i=1}^N \alpha_i \sin^2 (\Omega_i + \omega_i)$. The associated eigenvector is $[1, 0, 0, \cdots, \frac{\sin (\omega_i + \Omega_i)}{\sin (\omega_1 + \Omega_1)}, 0, \cdots, 0, \frac{\sin( \omega_j + \Omega_j)}{\sin (\omega_1 + \Omega_1)}, \cdots]$ where the non-zero entries are the rows associated to the $\omega_i$.
In this case, Fenichel theory cannot be applied to investigate the persistence of invariant slow manifolds at order $O(\epsilon)$ and it
is not directly obvious how one can study those. 
Indeed, the non-hyperbolic critical manifold is not nilpotent since one eigenvalue of the Jacobian is non-zero and techniques that handle non-hyperbolicity such as the blow-up method cannot be directly applied \cite{Jardon-Kojakhmetov_Kuehn_2019}.

The numerical experiments presented below suggest that solutions of the dynamics are indeed attracted towards $I(t) - \sum_i \alpha_i \cos\phi_i$. However, they also reveal that the dynamics of each oscillator can be rather complicated. 
Therefore, we conjecture that locally invariant manifolds of type $I(\theta) = \sum_{j=1}^N \alpha_j \cos(\Omega_j + \omega_j)$ do indeed exist and that the resulting dynamics includes complex interactions between oscillators. A formal analysis of this case remains however beyond the scope of this paper, and might be of limited interest for engineering applications due to the seeming lack of convergence to defined frequencies and amplitudes as discussed below.
\begin{figure}
\centering
\subfloat[$K=10$]{
  \includegraphics[width=.3\textwidth]{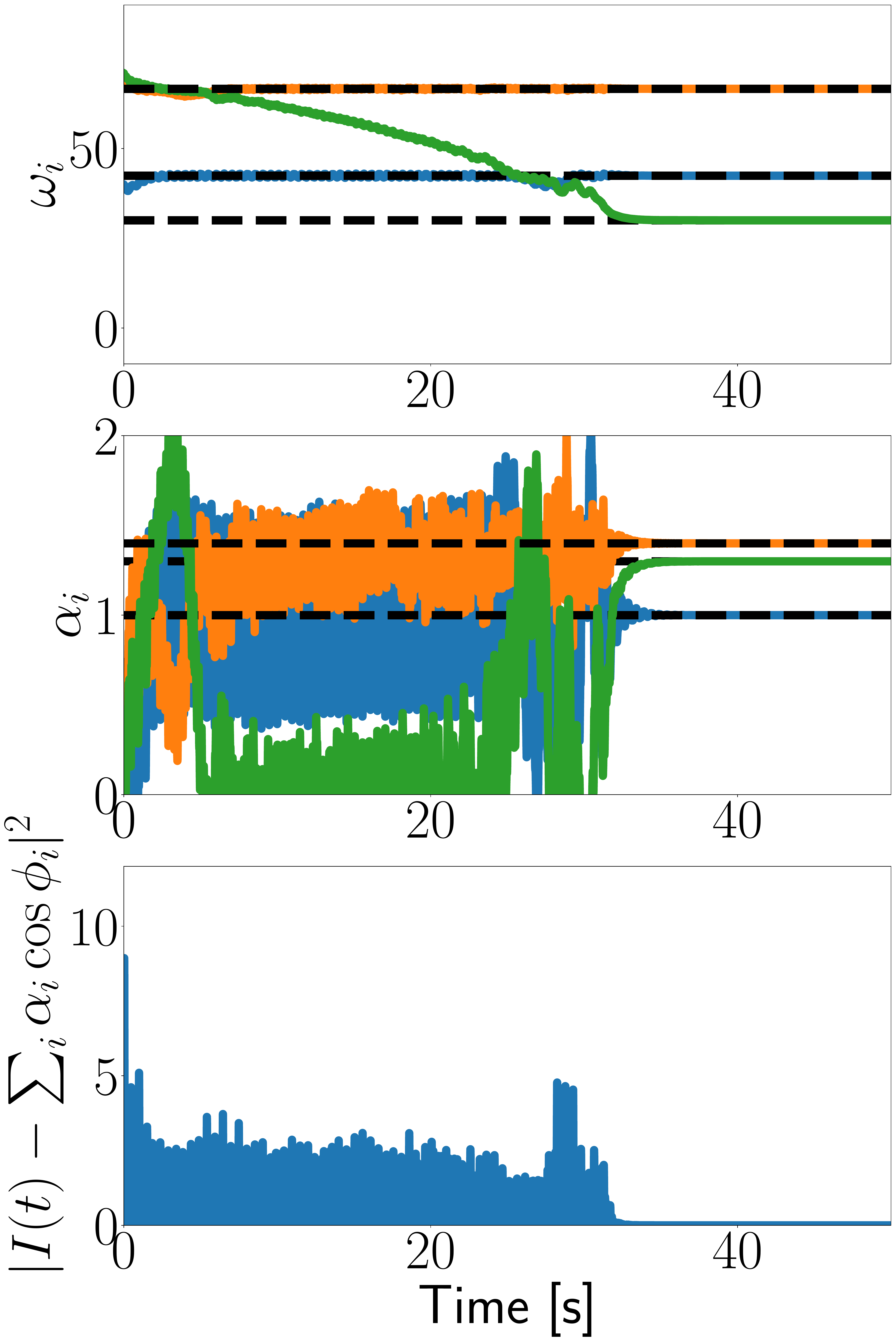}
}
\subfloat[$K=100$]{
  \includegraphics[width=.3\textwidth]{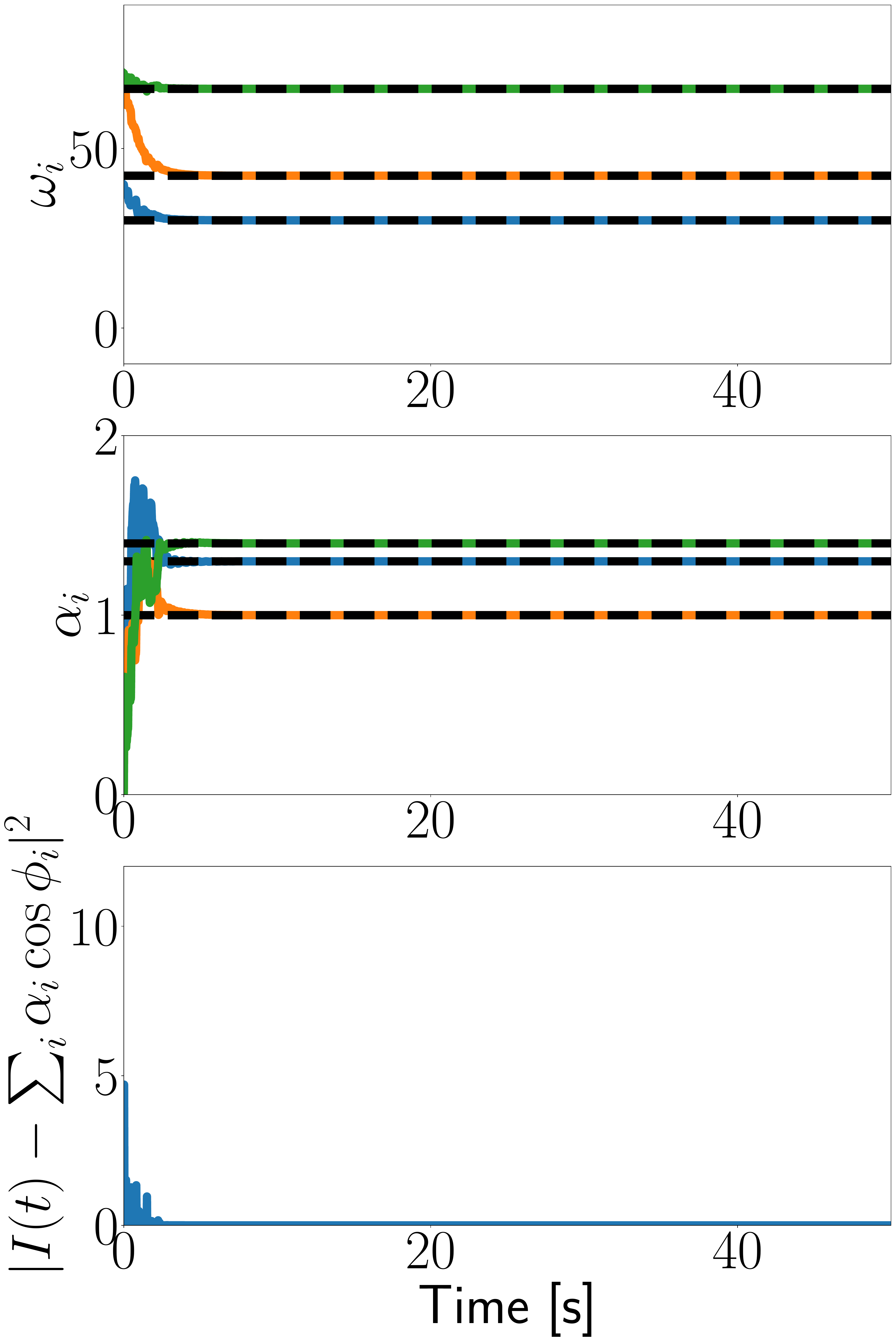}
}
\subfloat[$K=10000$]{
  \includegraphics[width=.3\textwidth]{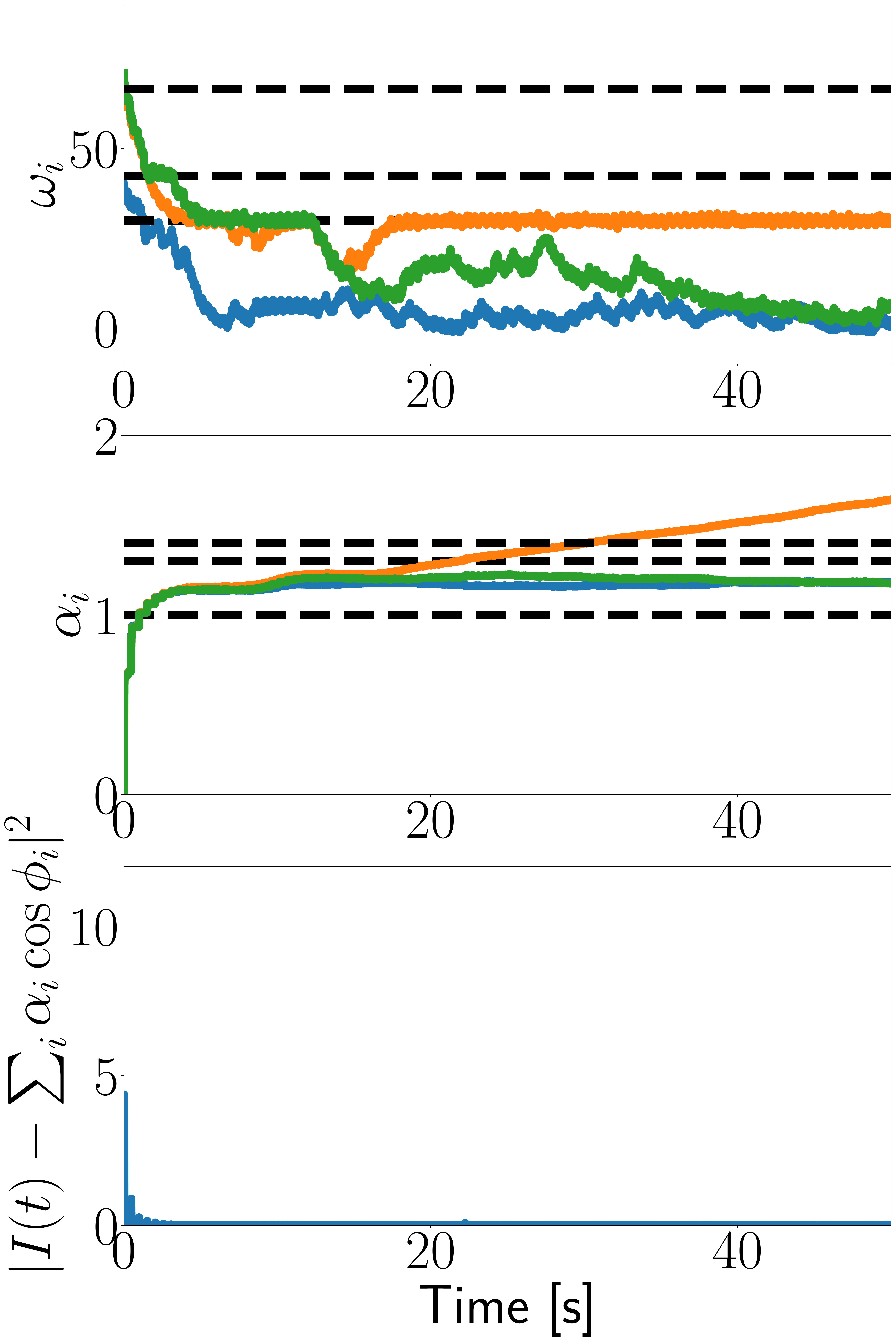}
}
  \caption{Examples of decomposition of the spectrum of an input signal $I(t) = 1.3\cos(30t) + \cos(30\sqrt{2}t) + 1.4\cos(\frac{30\pi}{\sqrt{2}}t)$ with a pool of $N=3$ oscillators with amplitude adaptation for three different coupling strengths. The parameters used in the simulations are $\lambda=1$ and $\eta=10$. For each experiment, the top graphs show the evolution of the state variables $\omega_i$ and $\alpha_i$, the bottom graph is the square error between the input signal $I(t)$ and the output of the pool of oscillators $\sum_{i=0}^N \alpha_i \cos \phi_i$. The dashed lines correspond to the input frequencies and amplitudes.
  }\label{fig:pool3oscill}
\end{figure}

\begin{figure}
\centering
\includegraphics[width=0.85\textwidth]{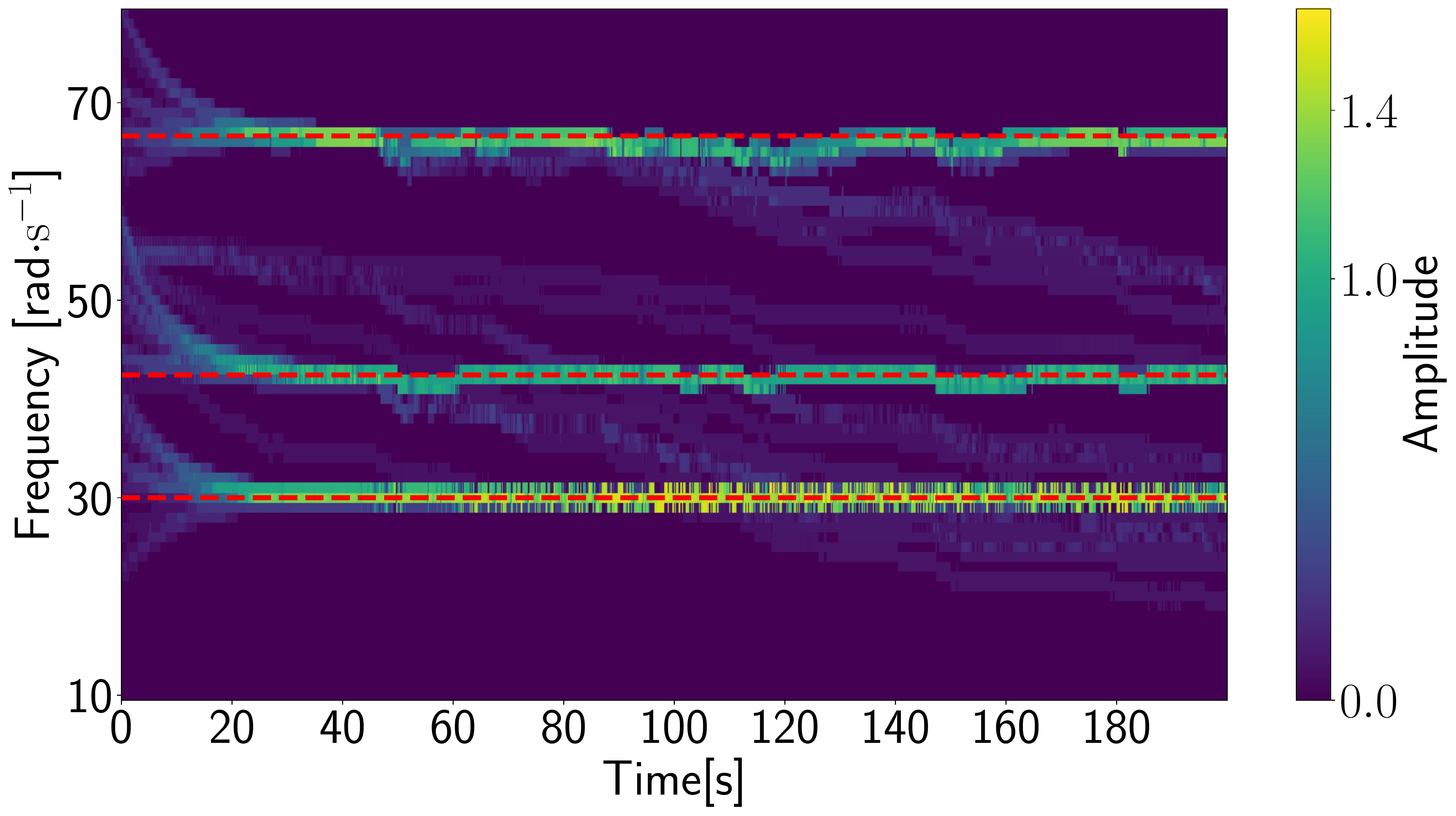}\\
\includegraphics[width=.46\textwidth]{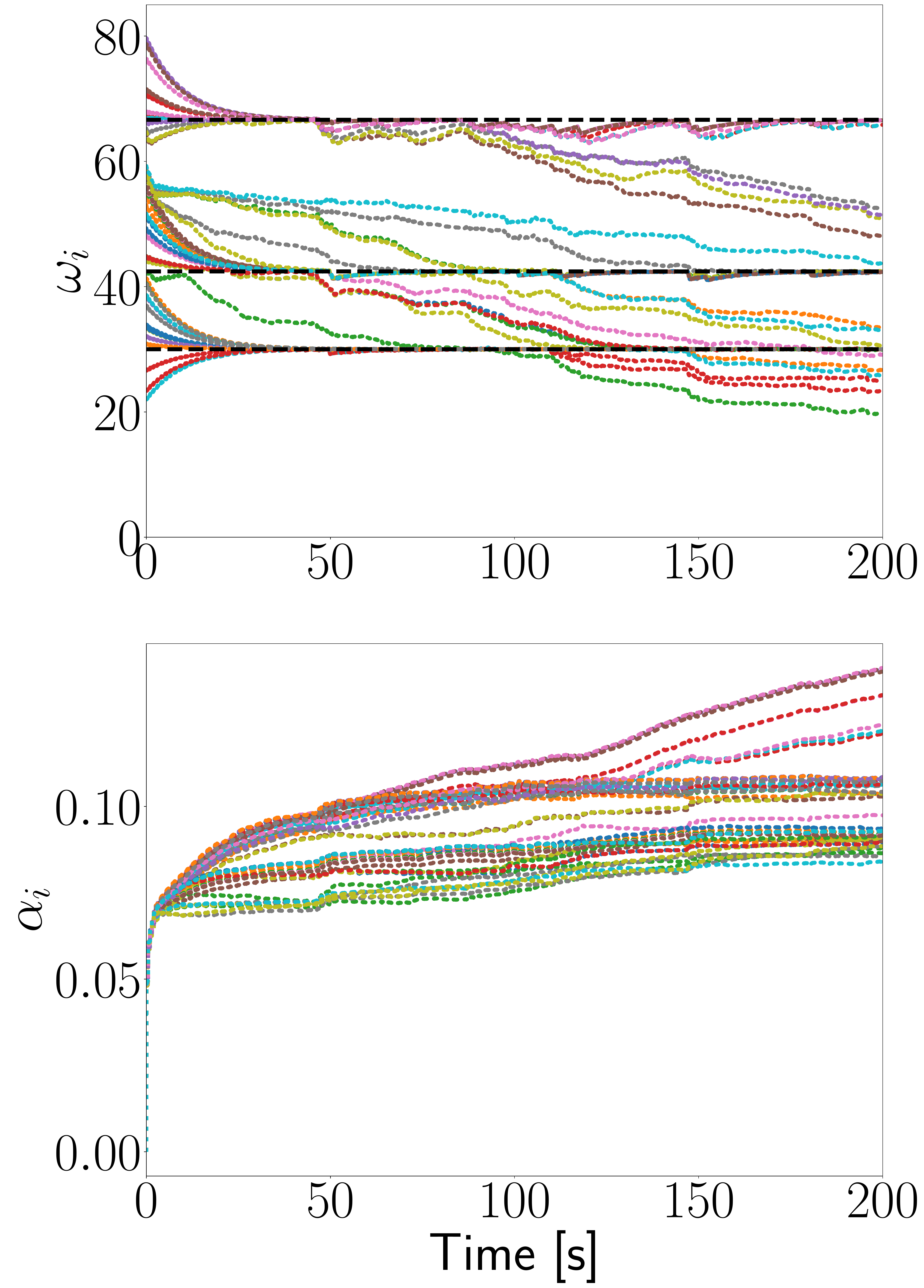}
\includegraphics[width=.46\textwidth]{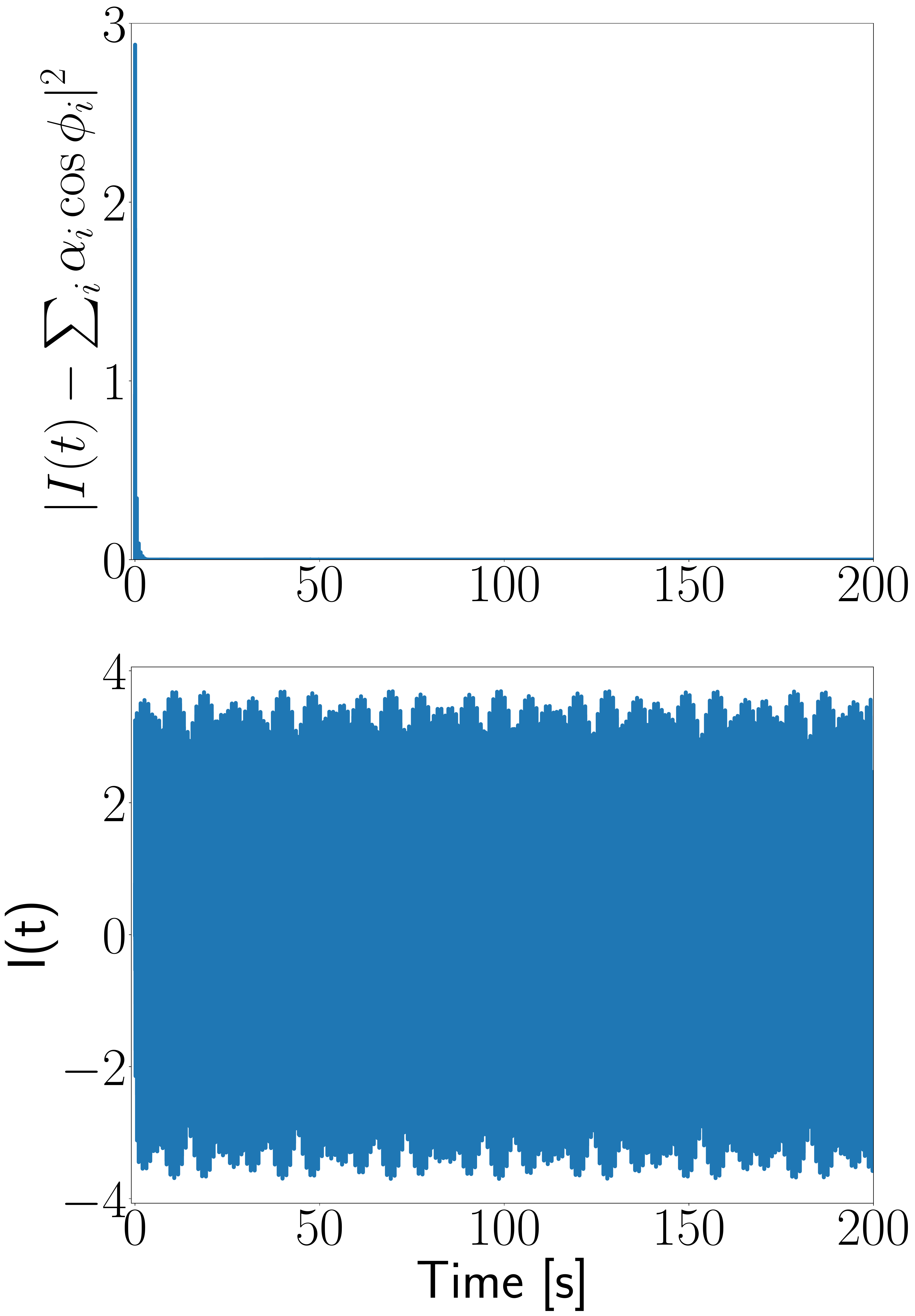}
\caption{Pool of oscillators ($N=50$) with input $I(t) = 1.3\cos(30t) + \cos(30\sqrt{2}t) + 1.4\cos(\frac{30\pi}{\sqrt{2}}t)$. We used $K=10^4$, $\lambda=0.1$ and $\eta=1$ in the simulations. Top: amplitude weighted frequency distribution (cf. text for details) as a function of time. The red dashed lines correspond to the frequencies present in the input. Bottom left: evolution of $\omega_i$ and $\alpha_i$. Bottom right: input signal $I(t)$
and squared error between input and output.}\label{fig:pool50oscill}
\end{figure}

\begin{figure}
\centering
\includegraphics[width=0.85\textwidth]{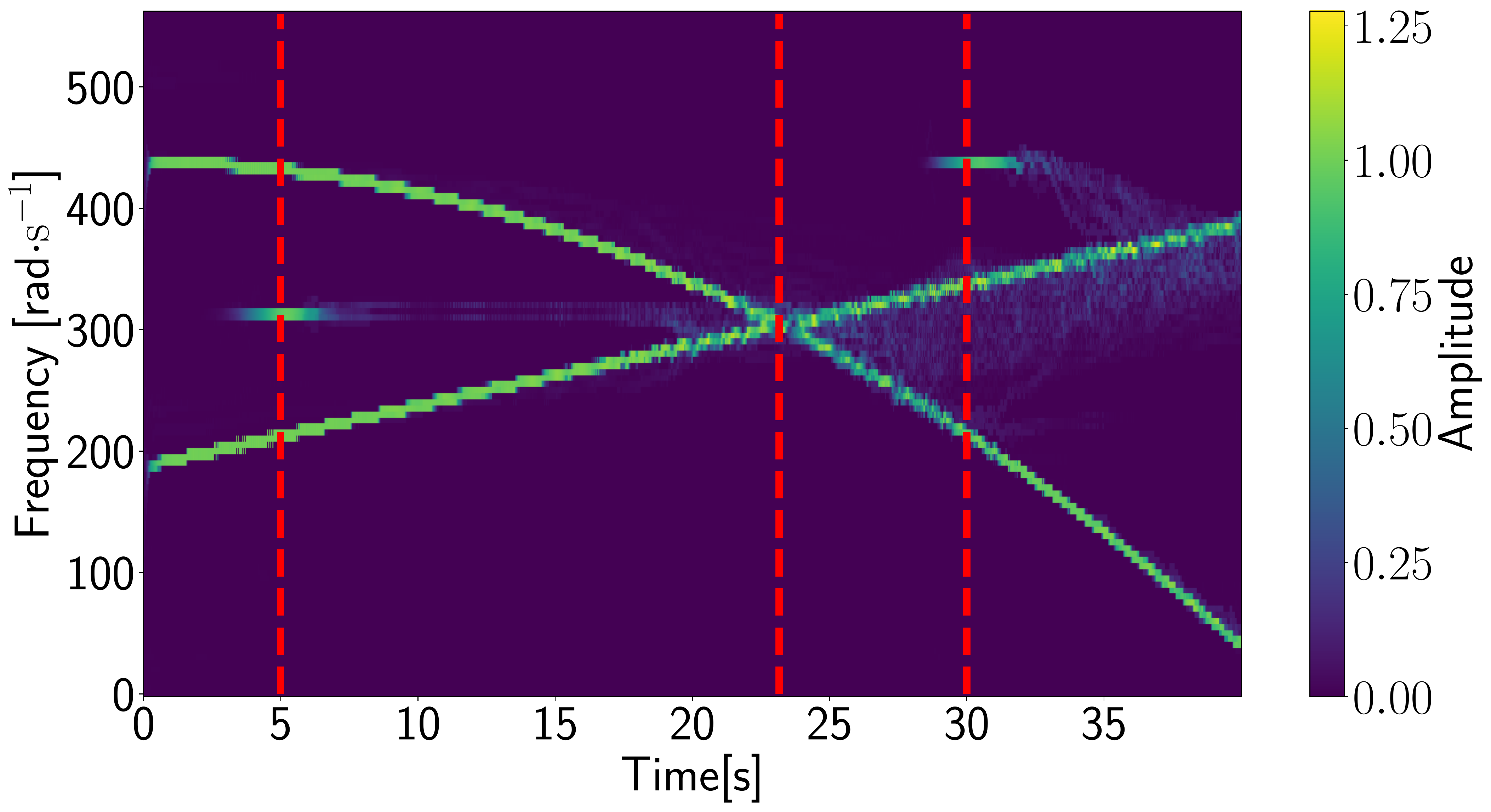}\\
\includegraphics[width=.46\textwidth]{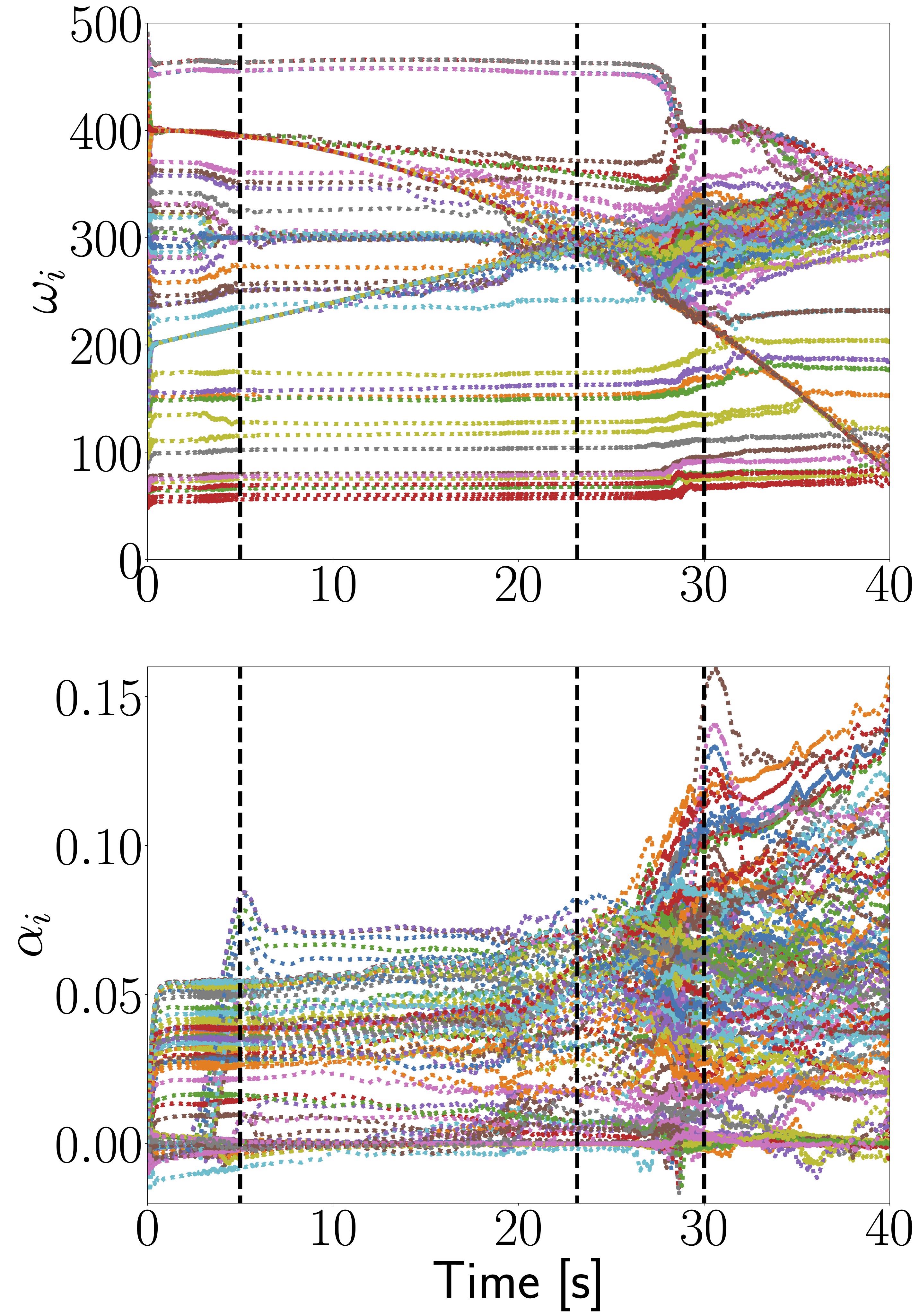}
\includegraphics[width=.46\textwidth]{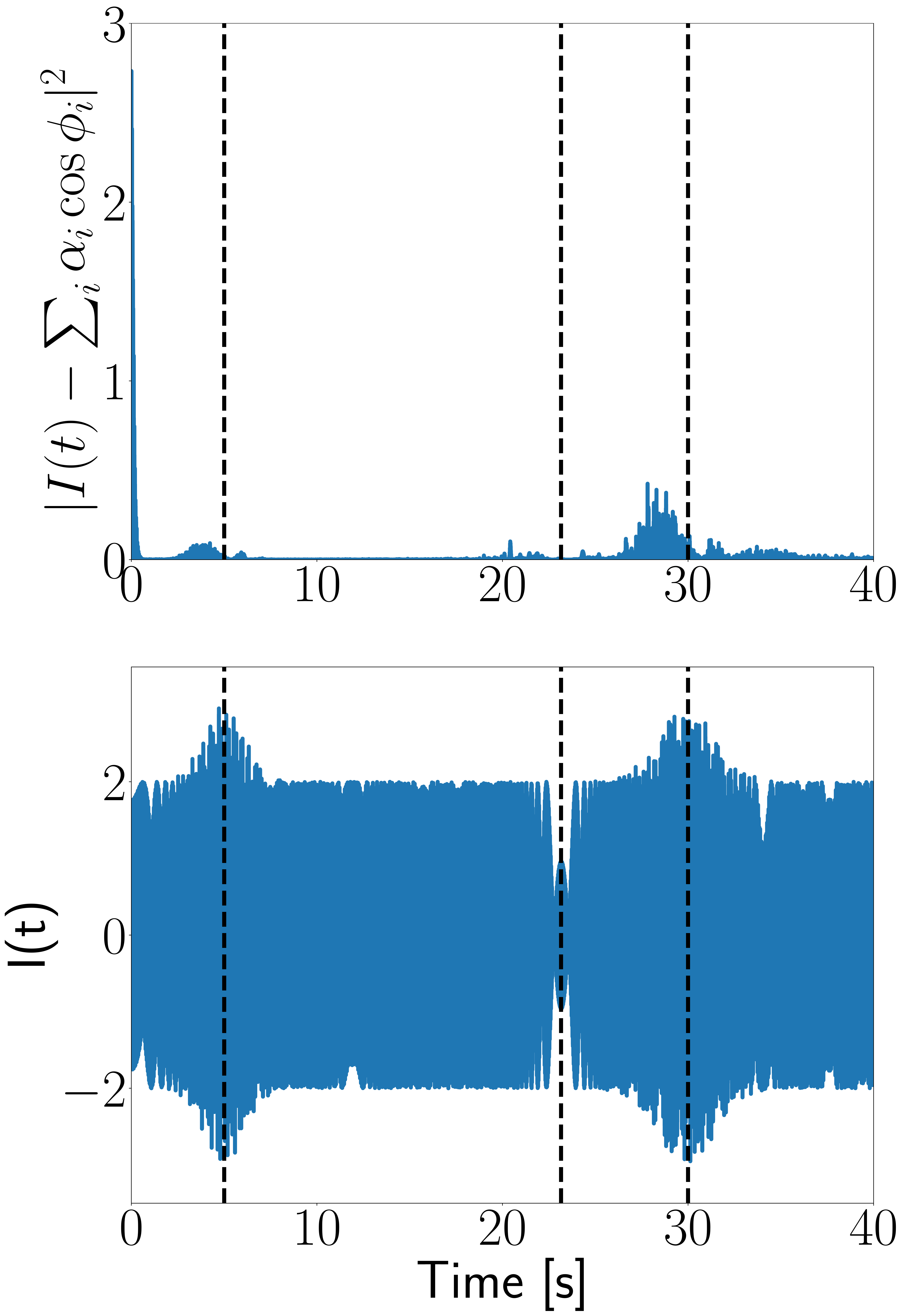}
\caption{Pool of oscillators with time varying spectra input (cf. \cref{sec:time_varying_spectra} for more details). We used $K=100$, $\lambda=10$ and $\eta=0.5$ in the simulations. Top: amplitude weighted frequency distribution as a function of time. Bottom left: evolution of $\omega_i$ and $\alpha_i$. Bottom right: input signal $I(t)$
and squared error between input and output. The vertical dashed lines show the important events in time: maximum of the Gaussians and crossing of the chirps.}\label{fig:time_var}
\end{figure}

\subsection{Numerical experiments}
We now present numerical experiments to illustrate our findings and illustrate some of the capabilities and limits of the system.
\subsubsection{Discrete spectra}
This example shows the typical behavior of a network of oscillators when the input signal has a discrete frequency spectrum. We use $I(t) = 1.3\cos(30t) + \cos(30\sqrt{2}t) + 1.4\cos(\frac{30\pi}{\sqrt{2}}t)$, already used in \Cref{sec:aperioc}.
We showed previously that the frequency of a single oscillator in open-loop would not converge in that case. In contrast, with the feedback loop and amplitude adaptation, the oscillators systematically adapt their states such that $I(t)\simeq \sum_i \alpha_i \cos\phi_i$.

\Cref{fig:pool3oscill} presents results with $N=3$ oscillators (i.e. the minimum number to reconstruct the input frequency spectrum) for different values of $K$. For small K, frequency adaptation becomes exponential when getting close to the input frequency (i.e. when entering the synchronization region - as discussed in the previous section). 
After frequency convergence, the corresponding amplitude is adapted.
Interestingly the green $\omega_i$ crosses the frequencies already taken by the other oscillators to adapt to the remaining frequency.
Eventually all the frequencies and amplitudes converge to the expected values and $I(t) = \sum_i \alpha_i \cos\phi_i$. We observe similar results for $K=100$, except that convergence is exponential from the beginning, leading to faster convergence.
When $K=10000$ (i.e. strong coupling case) the situation is different. Only one $\omega_i$ converges to one of the frequency component of the input. The other $\omega_i$ do not seem to converge to any specific value. On the other hand, the amplitude corresponding to the converged frequency does not seem to converge to a specific value while the two other amplitudes do converge, but not to one of the amplitudes associated to one of the cosine of the input. Nevertheless, the output of the pool of oscillator perfectly reconstructs the input signal very quickly. It is interesting to see that the feedback loop enables the pool of oscillator to reconstruct the signal albeit the exact frequency components in the input are not recovered when the coupling becomes too high.

\Cref{fig:pool50oscill} shows results with $N=50$ oscillators for large coupling. 
The upper graph shows the frequency distribution of the oscillators, i.e. the distribution of $\omega_i$ weighted by the respective $\alpha_i$ while taking into account the oscillator phases. We data presented in this figure is computed as follows. At each time $t$, the frequency spectrum is discretized into frequency bins (1 rad/s in this case) and each $\omega_i(t)$ is associated to a bin. The amplitude
associated to a bin of frequency $\psi$ is computed as $\max_{\bar{t}} ( \sum_i \alpha_i(t) \cos(\psi \bar{t} + \phi_i(t)))$, where $t$ is fixed and $\bar{t}$ is varied to cover at least one period of oscillation. This representation gives the same information as a spectrogram resulting from a windowed Fourier transform.

The frequencies and amplitudes adapt such that
the output of the network reproduces the input very quickly.
Initially, frequencies $\omega_i$ are all attracted to the frequency components present in the input. However frequencies do not seem to all converge. We notice that the overall amplitudes associated to each frequency component, i.e. the contribution of all the oscillators close to this frequency, match well those of the input (visible in the top graph).

Both simulations show that the pool of oscillator can well reproduce a non-periodic input with discrete spectrum whereas a single oscillator without feedback would not converge to any frequency. However, we notice that not all frequencies $\omega_i$ converge to the input frequency components. This behavior suggests that the loss of hyperbolicity for candidate invariant manifolds when $I(t) = \sum_i \alpha_i \cos\phi_i$ might play an important role in describing the complete dynamics as the network dynamics seems to converge to it. The dynamics appears to change qualitatively as coupling increases, which is in contrast to the single oscillator case where convergence happens for any value of $K$. 

\paragraph{Time-varying spectra}\label{sec:time_varying_spectra}
Finally, we illustrate the capabilities of the system for moderate coupling to track a time varying spectrum, with appearing and disappearing frequency components, demonstrating its generic frequency analysis capabilities.
In this example, the input is composed of one linear chirp $\sin(200t + 2t^2)$, one quadratic chirp $\sin(400t - \frac{t^3}{15})$, and two frequency modulated Gaussians: $\sin(300t)\exp^{-\frac{(t-5)^2}{2.5}}$ and $\sin(400t)\exp^{-\frac{(t-30)^2}{5}}$. We use $N=100$ oscillators. The results are shown in \cref{fig:time_var}.
 We see that the system is able to track the chirps and to appropriately locate the Gaussians. All the important features of the signal are clearly visible.
We also notice that the error between the system output and the input is almost always $0$, except when a new component appears (the Gaussian) or when the quadratic chirp becomes too fast, but still the match is very good.
The time evolution of $\omega_i$ and $\alpha_i$ shows that oscillators that are not used to encode the chirps are recruited when an event appears (e.g. the Gaussians). We also notice appearing and disappearing clusters of frequencies and amplitudes representing the different signals.

\section{Conclusion}
We analyzed the geometric structure of frequency adaptation
for an adaptive frequency phase oscillator with strong coupling.
We characterized the existence of invariant slow manifolds and demonstrated
that the frequency adaptation mechanism resulted from 
the alternation of slow and fast dynamics, regulated by sign changes of an input signal. 
The slow-fast dynamics described in this paper is rather unique in that regard.
Our analysis enabled to extend such systems to set the exponential convergence rate  $\lambda$.
A discrete map summarizing the slow-fast dynamics allowed to characterize important features of the system, such as its convergence rate or predicting that for some non-periodic signals frequency adaptation would not converge.

We have also analyzed the case of a network of adaptive frequency oscillators with amplitude adaptation. Interestingly, the slow manifolds characterized in the simple oscillator case persist and the feedback loop leads to the appearance of a novel type of slow manifolds for the single oscillator case. When several oscillators are used, the novel critical manifold is not hyperbolic nor nil-potent.
While numerical simulations show that the system converges to $I(t) = \sum \alpha_i \cos \phi_i$, whether there exists slow invariant manifolds of this type remains an open question. Numerical simulations further showed the ability of the system to track complex signals with time-varying frequency components.

To the best of our knowledge, previous work describing 
adaptive frequency oscillators (e.g. second order oscillators or oscillators with intertial effects \cite{acebron98, acebron_synchronization_2000, tanaka_first_1997, taylor_spontaneous_2010}) need an explicit representation of either the phase of other oscillators, or the input signal’s period or frequency. This is in contrast with the system we described which can extract the frequency of arbitrary inputs. 
From this viewpoint, the mechanism described is potentially more practical in engineering applications where input signals are not known in advance and can be noisy and time-varying.
We believe that the results presented in this paper will further help design real applications beyond existing ones in robotics, control and estimation applications \cite{buchli08b, gams08b, righetti06, Petric:2011dr, ronsse11}, further help understand their fundamental limitations and facilitate their usage in real-world settings.

\bibliographystyle{siamplain}
\bibliography{pooloscill}
\end{document}